\newtheorem{lemma}{Lemma} 
\newtheorem{assumption}{Assumption}
\newtheorem{theorem}{Theorem}
\title{Breaking the Convergence Barrier: Optimization via Fixed-Time Convergent Flows}
\author {
    Param Budhraja\equalcontrib,\textsuperscript{\rm 1}
    Mayank Baranwal\equalcontrib,\textsuperscript{\rm 2}
    Kunal Garg\textsuperscript{\rm 3} {\normalfont and}
    Ashish Hota\textsuperscript{\rm 1}
}
\begin{document}
\maketitle
\begin{abstract}
Accelerated gradient methods are the cornerstones of large-scale, data-driven optimization problems that arise naturally in machine learning and other fields concerning data analysis. We introduce a gradient-based optimization framework for achieving acceleration, based on the recently introduced notion of fixed-time stability of dynamical systems. The method presents itself as a generalization of simple gradient-based methods suitably scaled to achieve convergence to the optimizer in a fixed-time, independent of the initialization. We achieve this by first leveraging a continuous-time framework for designing fixed-time stable dynamical systems, and later providing a consistent discretization strategy, such that the equivalent discrete-time algorithm tracks the optimizer in a practically fixed number of iterations. We also provide a theoretical analysis of the convergence behavior of the proposed gradient flows, and their robustness to additive disturbances for a range of functions obeying strong convexity, strict convexity, and possibly nonconvexity but satisfying the Polyak-Łojasiewicz inequality. We also show that the regret bound on the convergence rate is constant by virtue of the fixed-time convergence. The hyperparameters have intuitive interpretations and can be tuned to fit the requirements on the desired convergence rates. We validate the accelerated convergence properties of the proposed schemes on a range of numerical examples against the state-of-the-art optimization algorithms. Our work provides insights on developing novel optimization algorithms via discretization of continuous-time flows.
\end{abstract}


\section{Introduction and Related Work} \label{sec: Intro}

Optimization algorithms lie at the heart of modern artificial intelligence and machine learning techniques \cite{sra2012optimization}. In most applications, fast and efficient algorithms are desired for solving the optimization problem at hand. This is particularly true in machine learning applications where large data sets lead to larger problem instances and potentially larger computational time. As a result, stochastic gradient descent (SGD), its variants such as mini-batch SGD \cite{shalev2014understanding}, Adam \cite{kingma2015adam}, momentum-based, and accelerated stochastic methods have emerged as popular choices \cite{huo2018accelerated, li2020accelerated}.


In developing accelerated optimization algorithms, the discrete-time framework often proves non-intuitive and restrictive from an analytical standpoint. In contrast, continuous-time algorithms provide better intuition, and simpler and elegant proofs are often obtained by leveraging the tools of Lyapunov stability theory.
Indeed, the connection between ordinary differential equations and optimization has been recognized for several decades. For instance, \cite{brown1989} is one of the early works that leveraged the continuous-time framework to develop faster discrete-time algorithms. Similarly, the continuous-time version of gradient descent, termed {\it gradient flow} (GF) dynamics, was analyzed in \cite{su2016}. In \cite{andre2016}, the family of Bregman-Lagrangians was used to generate second-order Lagrangian flows and exponential convergence rates were established. Despite much progress, there remain two main limitations for continuous-time algorithms: (1) most of the analysis has focused on asymptotic and exponential convergence, i.e., convergence as time tends to infinity; and (2) there have been few systematic studies on developing discrete-time implementations such that the accelerated convergence properties of the continuous-time algorithm are preserved.





In this paper, we focus on continuous-time (accelerated) gradient flow dynamics with fixed-time convergence guarantees. The notion of finite-time stability (FTS), which is a precursor to the notion of fixed-time stability, was proposed in the seminal work \cite{bhat2000}. A system is said to be finite-time stable if the trajectories converge to the equilibrium in a finite amount of time, called the {\it settling time}. The settling time may depend on the initial conditions, and can potentially grow unbounded as the initial conditions go farther away from the equilibrium point. Fixed-time stability (FxTS), on the other hand, is a stronger notion, which requires the settling time to be uniformly bounded for all initial conditions, i.e., convergence within a \textit{fixed} time can be guaranteed \cite{polyakov2011nonlinear}. 

In the recent few years, continuous-time optimization methods under the notions of FTS and FxTS have gained significant interest. In \cite{cortes2006}, the author proposed a normalized version of GF and proved its finite-time stability. For convex optimization problems with equality constraints, the authors in \cite{chen2018}, designed discontinuous dynamical systems having the property of finite-time convergence. Recently in \cite{kunal2021}, FxTS gradient flows were proposed for unconstrained, constrained and min-max optimization problems. However, the aforementioned works only guarantee improved convergence in the continuous-time domain and do not provide a discrete-time implementation having accelerated convergence.

Recently, \cite{polyakov2019consistent} introduced the notion of \textit{consistent} discretization for finite, and fixed-time stable dynamical systems. In particular, they proposed an implicit discretization scheme that preserves the convergence behavior of the continuous-time system. However, these results are of little use for the optimization community, since, a) the requirement of the dynamics being homogeneous cannot be satisfied unless the equilibrium point, in this case, the optimizer, is known, and b) implicit discretization schemes are not easy to implement, thus, making it difficult to use these schemes for iterative methods. The authors in \cite{benosman2020optimizing} showed that the FTS flow, re-scaled gradient flow, and signed-gradient flow, all with a finite-time convergence, when discretized using various explicit schemes, such as Euler discretization or Runge-Kutta method, preserve the convergence behavior in the discrete-time, i.e., the minimizer could be computed within a finite number of iterations for a class of convex optimization problems. The authors evaluated their proposed methods for training neural networks and showed a significant improvement in the performance.

In this paper, a consistent discretization of FxTS-GF is proposed using the method proposed in \cite{garg2021MVIP}. We then show the robustness of FxTS-GF to vanishing disturbance, under the assumption of Polyak-Łojasiewicz (PL) inequality. Note that a function satisfying the PL-inequality can be nonconvex and that PL-inequality is a weaker assumption than strong convexity. It was shown in \cite{karimi2016linear} that PL inequality is one of the weakest assumptions under which linear convergence can be proven, which was earlier proven under the assumption of strong convexity. We then analyze the static regret of FxTS-GF and show that it is bounded by a constant. Finally, numerical experiments are conducted to compare the performance of FxTS-GF with state-of-the-art optimization algorithms. The proposed algorithm achieves lower training loss than traditional optimization methods, which also translates to better generalization on test instances.



\noindent{\bf Notation}: The set of all real numbers is denoted by $\mathbb{R}$. The set of all positive reals is denoted by $\mathbb{R}_{>0}$. The zero vector belonging to $\mathbb{R}^n$ is denoted by $\bm{0}$. For $x\in \mathbb{R}^n$, its transpose is represented by $x^\intercal$. Unless otherwise specified, $\|\cdot\|$ denotes the Euclidean norm. The set of all functions $f\!:\!U\!\to\!V$, where $U\subseteq \mathbb{R}^n$ and $V\subseteq \mathbb{R}^m$, which are $k$-times continuously differentiable is denoted by $C^k(U,V)$. The set of functions $f\!:\!U\!\rightarrow \!V$ which are continuously differentiable with locally Lipschitz continuous gradient on $U$ is denoted by $C^{loc}_{1,1}(U,V)$. For compactness, a function's argument might be omitted, whenever clear from the context. For $f\in C^1(\mathbb{R}^n, \mathbb{R})$, its gradient is denoted by $\nabla f$. A set-valued mapping $\mathcal{F}:\mathbb{R}^n \rightrightarrows \mathbb{R}^m$ maps every $x\in \mathbb{R}^n$ to a set of $\mathbb{R}^m$.


\section{Preliminaries} \label{sec: prelims}
We start by presenting the problem setting, required assumptions and the fixed-time stability property of gradient flow dynamics. Consider the unconstrained optimization problem 
\begin{equation} \label{opt_prob}
\min_{x\in \mathbb{R}^n} f(x),
\end{equation}
where $f:\mathbb{R}^n \to \mathbb{R}$. We make the following assumptions on the function $f$.

\begin{assumption} \label{basicasmp}
The function $f$ attains the minimum value $f^\star>-\infty$ at $x^\star \in \mathbb{R}^n$, i.e., $f^\star \coloneqq f(x^\star)>-\infty$.
\end{assumption}

\begin{assumption} \label{plineq}
The function $f\in C_{1,1}^{loc}(\mathbb{R}^n, \mathbb{R})$ has a unique minimizer $x=x^\star$ and satisfies Polyak-Lojasiewicz (PL) inequality, or is gradient dominated, i.e. there exists $\mu>0$, such that $\forall x\in \mathbb{R}^n$, 
\begin{equation} \label{grad_dom}
\frac{1}{2}\|\nabla f(x)\|^2 \geq \mu(f(x)-f^\star).
\end{equation}
\end{assumption}
Under the assumption of gradient dominance it was shown in (\cite{karimi2016linear}, Theorem 2) that the function $f(x)$ has a quadratic growth, i.e. 
\begin{equation} \label{quadgrowth}
f(x)-f^\star \geq \frac{\mu}{2}\|x-x^\star\|^2,
\end{equation}
for all $x\in \mathbb{R}^n$.

We now formally discuss the notion of fixed-time stability (FxTS). Consider the dynamical system 
\begin{equation} \label{dynasys}
\dot{x}=g(x), 
\end{equation}
where $x\in \mathbb{R}^n$, $g(\bm{0})=0$ and let solution to (\ref{dynasys}) exist, is unique, and continuous for any initial condition $x(0)\in \mathbb{R}^n$, for all $t\geq 0$. As introduced in \cite{polyakov2011nonlinear}, an equilibrium point of \eqref{dynasys} is called as FxTS if (i) it is Lyapunov stable, and (ii) there exists a fixed-time $T<\infty$ (also known as settling time), such that for all initial conditions $x(0)\in\mathbb R^n$, the solution of \eqref{dynasys} satisfies $x(t)\!=\!0$ for all $t\geq T$. The following lemma provides sufficient conditions for FxTS of the origin.

\begin{lemma}[\cite{polyakov2011nonlinear}] \label{fxts_lemma}
Suppose there exists a positive definite, radially unbounded function $V\in C^1(\mathcal{D},\mathbb{R})$, where $\mathcal{D}\subset \mathbb{R}^n$ is a neighbourhood of origin, such that 
\begin{equation}\label{eq:dotV cond}
\dot{V}(x)\leq -pV(x)^{\alpha} -qV(x)^{\beta}, \quad \forall x\in \mathcal{D}\setminus \{\bm{0}\}, 
\end{equation}
where $p,q >0$, $\alpha \in (0,1)$ and $\beta>1$. Then, the origin of the system (\ref{dynasys}) is fixed-time stable with a settling time 
\begin{equation}
T \leq \frac{1}{p(1-\alpha)} + \frac{1}{q(\beta -1)} . 
\end{equation}
\end{lemma}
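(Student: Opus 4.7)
The plan is to start from Lyapunov stability, then convert the differential inequality on $V$ into a scalar ODE-like comparison bound and split the trajectory into two phases separated by the level set $\{V=1\}$.

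First I would establish Lyapunov stability of the origin. Since $V$ is positive definite and radially unbounded on $\mathcal{D}$, and since the right-hand side of \eqref{eq:dotV cond} is nonpositive for all $x\in\mathcal{D}\setminus\{\bm{0}\}$, the trajectories satisfy $\dot V\le 0$, so $V(x(t))$ is nonincreasing and Lyapunov stability follows from the standard Lyapunov argument. This also guarantees that solutions remain in a sublevel set of $V$ and hence inside $\mathcal{D}$ for all $t\ge 0$, so the differential inequality is valid along the whole trajectory.

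Next I would set $v(t):=V(x(t))$ and analyze the scalar inequality $\dot v\le -p v^{\alpha}-q v^{\beta}$. The key observation is that on $\{v\ge 1\}$ the $v^{\beta}$ term dominates (as $\beta>1$), while on $\{v\le 1\}$ the $v^{\alpha}$ term dominates (as $\alpha<1$). Accordingly I split the time axis into two phases. In Phase 1, while $v\ge 1$, I use $\dot v\le -q v^{\beta}$ and make the substitution $u=v^{1-\beta}$, for which
\begin{equation*}
\dot u = (1-\beta)v^{-\beta}\dot v \;\ge\; q(\beta-1),
\end{equation*}
so $u$ grows at least linearly; integrating from $0$ until the hitting time $T_1$ of $\{v=1\}$ gives
\begin{equation*}
1 - v(0)^{1-\beta} \;\ge\; q(\beta-1)\,T_1,
\end{equation*}
and since $v(0)^{1-\beta}\ge 0$ one gets $T_1\le \tfrac{1}{q(\beta-1)}$ uniformly in $v(0)$. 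In Phase 2, starting from $v(T_1)\le 1$, I use $\dot v\le -p v^{\alpha}$ and the substitution $w=v^{1-\alpha}$, which gives $\dot w\le -p(1-\alpha)$; integrating from $T_1$ to the first zero of $v$ yields an additional time bounded by $\tfrac{v(T_1)^{1-\alpha}}{p(1-\alpha)}\le \tfrac{1}{p(1-\alpha)}$. (If $v(0)\le 1$ already then $T_1=0$ and only Phase 2 is needed.) Adding the two contributions gives the desired uniform settling-time bound. Finally, to conclude true fixed-time stability rather than only convergence, I would note that once $v$ reaches $0$, positive definiteness of $V$ forces $x=\bm{0}$, and the nonincreasing property of $v$ ensures $x(t)=\bm{0}$ for all $t\ge T$.

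The routine parts are the two separation-of-variables computations. The main obstacle, and the reason one needs the split into two phases, is that neither term on the right-hand side of \eqref{eq:dotV cond} alone gives a uniform bound: $-qv^{\beta}$ only drives $v$ down rapidly when $v$ is large but degenerates near $0$, while $-pv^{\alpha}$ crushes $v$ to zero in finite time near $0$ but yields an initial-condition-dependent bound for large $v$. The trick of using the threshold $v=1$ (equivalently, any fixed positive level) to combine these two regimes, together with the substitutions $u=v^{1-\beta}$ and $w=v^{1-\alpha}$ that linearize the corresponding Bernoulli-type inequalities, is what makes the uniform settling-time estimate fall out cleanly.
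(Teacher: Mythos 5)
Your proof is correct. The paper itself gives no proof of this lemma --- it is quoted directly from \cite{polyakov2011nonlinear} --- so there is nothing in the text to compare against, but your two-phase comparison argument (using $\dot v\leq -qv^{\beta}$ above the level $v=1$ and $\dot v\leq -pv^{\alpha}$ below it, linearized via the substitutions $u=v^{1-\beta}$ and $w=v^{1-\alpha}$) is precisely the standard proof of Polyakov's result, and each step checks out, including the uniformity of the Phase-1 bound in $v(0)$ since $v(0)^{1-\beta}>0$ for every finite initial condition. The only caveat worth flagging is in the lemma's statement rather than your argument: as written, $\mathcal D$ is only a neighbourhood of the origin, so forward invariance of a sublevel set contained in $\mathcal D$ (which you correctly invoke) is what keeps the differential inequality valid along the trajectory; global fixed-time stability as in the paper's definition additionally requires $\mathcal D=\mathbb R^n$.
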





\subsection{Fixed-Time Stable Gradient Flow (FxTS-GF)}
We now introduce the following gradient flow dynamics and establish the FxTS of the optimizer. Specifically, consider the dynamics
\begin{equation} \label{fxtsgf}
\dot{x} =\begin{cases}
-c_1\frac{\nabla f(x)}{\|\nabla f(x)\|^{\frac{p_1-2}{p_1-1}}} -c_2\frac{\nabla f(x)}{\|\nabla f(x)\|^{\frac{p_2-2}{p_2-1}}}, & \nabla f(x) \neq \bm{0},\\
0, & \nabla f(x) = \bm{0},
\end{cases} 
\end{equation}
where $c_1,c_2 >0$, $p_1>2$ and $p_2 \in (1,2)$. The flow \eqref{fxtsgf}, first introduced in \cite{kunal2021}, is henceforth called as FxTS-GF. In \cite{kunal2021}, it was shown that the flow \eqref{fxtsgf} converges to the optimizer $x^\star$ of \eqref{opt_prob} within a fixed time, irrespective of the initial condition, under the Assumptions \ref{basicasmp} and \ref{plineq}. We now provide a self-contained proof of FxTS of \eqref{fxtsgf} with a different candidate Lyapunov function.


\begin{theorem}[\textbf{FxTS-GF}]\label{lemma:fxts_V}
Suppose the function $f$ satisfies Assumptions~\ref{basicasmp} and \ref{plineq}. Then, the flow given by \eqref{fxtsgf} converges to the optimizer $x^\star$ in a fixed time for all $x(0)\in \mathbb R^n$. 
\end{theorem}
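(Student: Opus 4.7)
The plan is to apply Lemma~\ref{fxts_lemma} to the dynamics \eqref{fxtsgf} with the Lyapunov candidate
\[
V(x) = f(x) - f^\star,
\]
after a trivial coordinate shift $y = x - x^\star$ so that the equilibrium sits at the origin. First I would verify the hypotheses on $V$: by Assumption~\ref{plineq}, $x^\star$ is the unique minimizer, so $V$ is positive definite; the quadratic growth bound \eqref{quadgrowth}, which the excerpt notes follows from PL, gives $V(x) \geq \tfrac{\mu}{2}\|x-x^\star\|^2$, so $V$ is radially unbounded. I would also note in passing that PL forces every critical point of $f$ to be a global minimizer, so $\nabla f(x)=\bm 0$ occurs only at $x^\star$, and the branch-cases in \eqref{fxtsgf} are consistent with the equilibrium condition in Lemma~\ref{fxts_lemma}.

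Next I would compute $\dot V$ along \eqref{fxtsgf} for $x \neq x^\star$:
\[
\dot V = \nabla f(x)^\intercal \dot x
= -c_1\,\|\nabla f(x)\|^{2-\frac{p_1-2}{p_1-1}} - c_2\,\|\nabla f(x)\|^{2-\frac{p_2-2}{p_2-1}}.
\]
A short algebraic simplification reduces the exponents to $p_1/(p_1-1)$ and $p_2/(p_2-1)$ respectively, yielding
\[
\dot V = -c_1\,\|\nabla f(x)\|^{\frac{p_1}{p_1-1}} - c_2\,\|\nabla f(x)\|^{\frac{p_2}{p_2-1}}.
\]
Then I would apply the PL inequality in the form $\|\nabla f(x)\| \geq \sqrt{2\mu\,V(x)}$ to each term separately, obtaining
\[
\dot V \leq -c_1 (2\mu)^{\alpha} V^{\alpha} - c_2 (2\mu)^{\beta} V^{\beta},
\]
with $\alpha = \tfrac{p_1}{2(p_1-1)}$ and $\beta = \tfrac{p_2}{2(p_2-1)}$.

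The last check is that the exponents land in the ranges required by Lemma~\ref{fxts_lemma}. For $p_1 > 2$ one has $p_1 < 2(p_1-1)$, so $\alpha \in (1/2, 1) \subset (0,1)$; for $p_2 \in (1,2)$ one has $p_2 > 2(p_2-1)$, so $\beta > 1$. Hence the sufficient condition \eqref{eq:dotV cond} of Lemma~\ref{fxts_lemma} is met with $p = c_1(2\mu)^{\alpha}$ and $q = c_2(2\mu)^{\beta}$, giving fixed-time convergence of the flow to $x^\star$ from every initial condition, with an explicit settling-time bound obtained by plugging these constants into the formula in Lemma~\ref{fxts_lemma}.

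I do not anticipate any deep obstacle; the only mildly delicate point is the exponent bookkeeping, together with checking that the PL-based lower bound on $\|\nabla f\|$ may be raised to the required powers without losing the inequality direction (it may, since $V \geq 0$ and all exponents involved are positive). Once the two exponents are shown to straddle $1$ in the correct way, Lemma~\ref{fxts_lemma} closes the argument immediately.
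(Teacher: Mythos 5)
Your proposal is correct and follows essentially the same route as the paper: the Lyapunov candidate $V(x)=f(x)-f^\star$, the computation of $\dot V$ along \eqref{fxtsgf}, the application of the PL inequality to each term, and the verification that $\alpha=\tfrac{p_1}{2(p_1-1)}\in(0,1)$ and $\beta=\tfrac{p_2}{2(p_2-1)}>1$ before invoking Lemma~\ref{fxts_lemma} all match the paper's argument. The only item the paper adds is an explicit appeal to a prior result for existence and uniqueness of solutions to \eqref{fxtsgf}, which you may wish to note but which does not affect the substance of your argument.
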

\begin{proof}
The existence and uniqueness of a continuous solution of \eqref{fxtsgf} for initial conditions at all times was proved in \cite[Proposition 1]{kunal2021}. Thus, we proceed with the proof of FxTS of the optimal point $x^\star$. Consider Lyapunov candidate $V(x)=f(x)-f^\star$. As $f^\star$ is the minimum value of $f$ and $x^\star$ is the unique minimizer of $f$, it holds that $V(x)> 0$ for all $x\neq x^\star$. The time derivative of $V$ is given by 
\begin{align*}
\dot{V}(x)&=\nabla f(x)^\intercal  \dot{x} \\
&= \nabla f(x)^\intercal \left(\!-c_1\frac{\nabla f(x)}{\|\nabla f(x)\|^{\frac{p_1-2}{p_1-1}}} -c_2\frac{\nabla f(x)}{\|\nabla f(x)\|^{\frac{p_2-2}{p_2-1}}}\!\right) \\
&= -c_1 \|\nabla f(x)\|^{\frac{p_1}{p_1 -1}} -c_2 \|\nabla f(x)\|^{\frac{p_2}{p_2 -1}}.
\end{align*}
Using inequality (\ref{grad_dom}), we get 
\begin{align*}
\dot{V} &\leq \!-c_1(2\mu(f\!-\!f^\star))^{\frac{p_1}{2(p_1\!-\!1)}}\!-\!c_2(2\mu(f\!-\!f^\star))^{\frac{p_2}{2(p_2\!-\!1)}} \\
&= -c_1\!(2\mu)^{\frac{p_1}{2(p_1-1)}}V^{\frac{p_1}{2(p_1-1)}}\!-\!c_2(2\mu)^{\frac{p_2}{2(p_2-1)}} V^{\frac{p_2}{2(p_2-1)}}.
\end{align*}
Define $p\coloneqq c_1 (2\mu)^{\frac{p_1}{2(p_1-1)}}$, $q\coloneqq c_2 (2\mu)^{\frac{p_2}{2(p_2-1)}}$. Since $p_1>2$ and $p_2\in (1,2)$, we have $\alpha\coloneqq\frac{p_1}{2(p_1-1)} \in (0,1)$ and $\beta\coloneqq\frac{p_2}{2(p_2-1)} > 1$. Thus, the conditions for Lemma \ref{fxts_lemma} are satisfied, and it follows that the equilibrium point $x^\star$ of \eqref{fxtsgf} is FxTS with settling time $T\leq \frac{1}{p(1-\alpha)} + \frac{1}{q(\beta -1)}$.
\end{proof}

\section{Robustness Analysis} 
The above result establishes the FxTS of the optimizer under flow dynamics \eqref{fxtsgf}. We now prove a stronger result that the FxTS property is preserved when the dynamics \eqref{fxtsgf} is subjected to additive noises or disturbances, under mild assumptions on the noise or the disturbances. This is particularly important in data-driven learning where only a noisy estimate of the gradient is available. Specifically, we consider the following dynamical system:
\begin{equation} \label{noisyfxts}
\dot{x}= -c_1\frac{\nabla f(x)}{\|\nabla f(x)\|^{\frac{p_1-2}{p_1-1}}} -c_2\frac{\nabla f(x)}{\|\nabla f(x)\|^{\frac{p_2-2}{p_2-1}}} + \varepsilon (x),
\end{equation}
where $\varepsilon:\mathbb
R^n\rightarrow\mathbb R^n$ is the additive noise term. We assume that the noise $\varepsilon(\cdot)$ is a vanishing disturbance and it satisfies the following assumption.
\begin{assumption} \label{as:noiseterm}
    There exists $l>0$ such that the noise term satisfies $\|\varepsilon (x)\|\leq l\|x-x^\star\|^2$, for all $x\in\mathbb{R}^n$. 
\end{assumption}

\noindent Observe that Assumption~\ref{plineq} along with \eqref{quadgrowth} yield 
\begin{equation*}
    \|x-x^\star\|^2 \leq \frac{1}{\mu^2}\|\nabla f(x)\|^2.
\end{equation*}
Together with Assumption \ref{as:noiseterm}, we obtain the following bound on the noise $\varepsilon$:
\begin{equation} \label{eq: noisebound2}
\|\varepsilon(x)\| \leq  \bar{l}\|\nabla f(x)\|^2 ,
\end{equation}
where $\bar{l}=\frac{l}{\mu^2}$. Note that Assumption \ref{as:noiseterm} also implies that the point $x^\star$ is an equilibrium point of the perturbed flow in \eqref{noisyfxts}. We now show that the FxTS property of $x^\star$ is robust to additive disturbance satisfying Assumption \ref{as:noiseterm}. 




\begin{theorem}[\textbf{Robustness of FxTS-GF}]\label{thm:Robustness}
Under Assumptions \ref{basicasmp}, \ref{plineq}, \ref{as:noiseterm}, if $c_1, c_2, p_2$ are chosen so that 
$4\mu^2 \min \left\{c_1, c_2 \right\}>l$ and $1<p_2\leq \frac{3}{2}$,
then the equilibrium point $x^\star$ is FxTS for the flow given by \eqref{noisyfxts}.
\end{theorem}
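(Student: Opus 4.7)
My plan is to follow the template of Theorem~\ref{lemma:fxts_V} with the same candidate $V(x)=f(x)-f^\star$, so that differentiating $V$ along the perturbed dynamics \eqref{noisyfxts} will give
\begin{equation*}
\dot V(x) = -c_1\|\nabla f(x)\|^{\frac{p_1}{p_1-1}} - c_2\|\nabla f(x)\|^{\frac{p_2}{p_2-1}} + \nabla f(x)^\intercal \varepsilon(x).
\end{equation*}
Aside from the last term this is exactly the expression handled in Theorem~\ref{lemma:fxts_V}, so the whole argument reduces to dominating the noise contribution by the two existing decay terms.

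First I would bound the cross term by Cauchy--Schwarz together with \eqref{eq: noisebound2}, obtaining $\nabla f(x)^\intercal \varepsilon(x) \leq \|\nabla f(x)\|\,\|\varepsilon(x)\| \leq \bar l\, \|\nabla f(x)\|^3$, with $\bar l = l/\mu^2$. The structural observation driving the rest of the argument is that, under the hypotheses $p_1>2$ and $1<p_2\leq 3/2$, the perturbation exponent $3$ lies between $\tfrac{p_1}{p_1-1}\in(1,2)$ and $\tfrac{p_2}{p_2-1}\in[3,\infty)$. Writing $\eta = \|\nabla f(x)\|$, this already implies $\eta^3\leq \eta^{p_1/(p_1-1)}$ whenever $\eta\leq 1$ and $\eta^3\leq \eta^{p_2/(p_2-1)}$ whenever $\eta\geq 1$, so the cubic noise term can always be absorbed into whichever of the two decay terms dominates at the current gradient magnitude.

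To make this precise I would use a weighted convex combination (a weighted AM--GM on the exponent $3\in[p_1/(p_1-1),\,p_2/(p_2-1)]$) to obtain a bound of the form
\begin{equation*}
\bar l\,\|\nabla f\|^3 \leq \kappa_1 \|\nabla f\|^{\frac{p_1}{p_1-1}} + \kappa_2 \|\nabla f\|^{\frac{p_2}{p_2-1}},
\end{equation*}
with non-negative constants $\kappa_1,\kappa_2$ that share a joint budget inherited from $\bar l$. The parameter condition $4\mu^2\min\{c_1,c_2\}>l$ is exactly what forces $\kappa_i<c_i$ for both $i=1,2$ after optimally balancing the split (the factor $1/4$ arising from this balancing). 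Substituting back,
\begin{equation*}
\dot V \leq -(c_1-\kappa_1)\|\nabla f\|^{\frac{p_1}{p_1-1}} - (c_2-\kappa_2)\|\nabla f\|^{\frac{p_2}{p_2-1}},
\end{equation*}
at which point the argument mirrors Theorem~\ref{lemma:fxts_V}: invoke the PL inequality \eqref{grad_dom} to replace the right-hand side by $-pV^\alpha-qV^\beta$ with $\alpha=\tfrac{p_1}{2(p_1-1)}\in(\tfrac12,1)$ and $\beta=\tfrac{p_2}{2(p_2-1)}\geq \tfrac32 > 1$, and conclude FxTS of $x^\star$ from Lemma~\ref{fxts_lemma}.

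The hard part is the absorption step, namely calibrating the split so that $\kappa_1,\kappa_2$ remain strictly below $c_1,c_2$ and the stated constant $4\mu^2$ emerges correctly; the condition $p_2\leq 3/2$ is precisely what is needed to make the larger exponent $p_2/(p_2-1)$ reach $3$ and close the absorption, and once that algebraic bookkeeping is done the rest of the proof is a direct adaptation of Theorem~\ref{lemma:fxts_V}.
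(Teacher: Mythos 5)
Your proposal matches the paper's proof essentially step for step: the same Lyapunov function $V=f-f^\star$, the same Cauchy--Schwarz bound $\nabla f^\intercal\varepsilon\leq \bar l\|\nabla f\|^3$ with $\bar l = l/\mu^2$, and the same key observation that $p_2\leq\tfrac32$ forces $\beta_1=\tfrac{p_2}{p_2-1}\geq 3$ while $\alpha_1=\tfrac{p_1}{p_1-1}<2$, so the cubic noise term is dominated by $\|\nabla f\|^{\beta_1}$ when $\|\nabla f\|\geq 1$ and by $\|\nabla f\|^{\alpha_1}$ when $\|\nabla f\|\leq 1$, after which the PL inequality and Lemma~\ref{fxts_lemma} finish the argument exactly as in Theorem~\ref{lemma:fxts_V}. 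The only divergence is your proposed weighted AM--GM ``balancing,'' which the paper does not perform: it simply carries out the two-case split on $S=\{x:\|\nabla f(x)\|\leq1\}$ and subtracts $\bar l$ from whichever coefficient absorbs the noise, landing on $\dot V\leq-(c_1-\bar l)\|\nabla f\|^{\alpha_1}-(c_2-\bar l)\|\nabla f\|^{\beta_1}$ in both cases, so the calibration you flag as ``the hard part'' is not needed --- though be aware that this crude absorption requires $\min\{c_1,c_2\}>\bar l=l/\mu^2$, and neither your sketch nor the paper's own proof actually derives the factor $4$ appearing in the stated hypothesis.
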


\begin{proof}
Consider the Lyapunov candidate $V(x)=f(x)-f^\star$ as before. The time derivative of $V$ is given by 
\begin{align*}
\dot{V}(x) = -c_1\|\nabla f(x)\|^{\frac{p_1}{p_1-1}}-c_2\|\nabla f(x)\|^{\frac{p_2}{p_2-1}}+\nabla f^\intercal \varepsilon(x). 
\end{align*} 
We define the following constants for the ease notation: $\alpha_1= \frac{p_1}{p_1 -1}$, $\beta_1= \frac{p_2}{p_2 -1}$. Note that $\alpha_1 \in (0,2)$ and $\beta_1 >2$. 
Now, we prove that 
\begin{equation} \label{eq:robust_bound}
\dot{V}(x)\leq -(c_1 -\bar{l})\|\nabla f(x)\|^{\alpha_1} -(c_2 -\bar{l})\|\nabla f(x)\|^{\beta_1},
\end{equation}
for all $x\in \mathbb{R}^n$. 
Using Assumption \ref{as:noiseterm} and triangle inequality, we obtain that $\nabla f^\intercal \varepsilon(x)\leq \|\nabla f(x)\|\|\varepsilon(x)\|\leq \bar{l}\|\nabla f(x)\|^3$. Thus, it follows that
\begin{equation}\label{eq: dot V robust proof}
\dot{V}(x) \leq -c_1 \|\nabla f(x)\|^{\alpha_1} -c_2\|\nabla f(x)\|^{\beta_1} +\bar{l}\|\nabla f(x)\|^3.
\end{equation}
Define $S = \{x\; |\; \|\nabla f(x)\|\leq 1\}$ and consider the two cases, namely, when $x\in S$ and $x\notin S$. 

First, consider the case when $x\notin S$. Re-arranging the right-hand side of \eqref{eq: dot V robust proof}, we obtain: 
\begin{align*}
\dot{V}(x) &\leq -c_1\|\nabla f(x)\|^{\alpha_1} -(c_2-\bar{l})\|\nabla f(x)\|^{\beta_1} \\ &\quad +\bar{l}\left(\|\nabla f(x)\|^3 -\|\nabla f(x)\|^{\beta_1}\right).
\end{align*}
Since $p_2\in \left(1,\frac{3}{2}\right]$, it follows that $\beta_1>3$. Furthermore, for all $x\notin S$, it holds that $\|\nabla f(x)\|>1$ and thus, it follows that $\|\nabla f(x)\|^3 -\|\nabla f(x)\|^{\beta_1}\leq0$. Hence, we obtain that
\begin{align*}
\dot{V}(x) &\leq -c_1\|\nabla f(x)\|^{\alpha_1} -(c_2-\bar{l})\|\nabla f(x)\|^{\beta_1} \\
&\leq -(c_1 -\bar{l})\|\nabla f(x)\|^{\alpha_1} -(c_2-\bar{l})\|\nabla f(x)\|^{\beta_1}.
\end{align*}

Next, consider the case when $x\in S$. Re-arranging the right-hand side of \eqref{eq: dot V robust proof}, we obtain
\begin{align*}
\dot{V}(x) &\leq -(c_1 -\bar{l})\|\nabla f(x)\|^{\alpha_1} -c_2\|\nabla f(x)\|^{\beta_1} \\ &\quad
+\bar{l}\left(\|\nabla f(x)\|^3 - \|\nabla f(x)\|^{\alpha_1} \right).
\end{align*}
For all $x\in S$, we have $\|\nabla f(x)\|^3 - \|\nabla f(x)\|^{\alpha_1} \leq 0$ since $\alpha_1 \in (0,2)$ and $\|\nabla f(x)\|\leq 1$. Thus, it holds that
\begin{align*}
\dot{V}(x) &\leq -(c_1 -\bar{l})\|\nabla f(x)\|^{\alpha_1} -c_2\|\nabla f(x)\|^{\beta_1} \\ 
&\leq -(c_1 -\bar{l})\|\nabla f(x)\|^{\alpha_1} -(c_2-\bar{l})\|\nabla f(x)\|^{\beta_1}.
\end{align*}
Thus, it follows from Assumption~\ref{grad_dom} that for all $x\in \mathbb R^n$
\begin{align*}
\dot{V}(x) &\leq -(c_1-\bar{l})(2\mu(f(x)-f^\star))^{\frac{p_1}{2(p_1-1)}} \\
&\qquad -(c_2-\bar{l})(2\mu(f(x)-f^\star))^{\frac{p_2}{2(p_2-1)}}.
\end{align*}
Define $p := (c_1-\bar{l}) (2\mu)^{\frac{p_1}{2(p_1-1)}}, q := (c_2-\bar{l}) (2\mu)^{\frac{p_2}{2(p_2-1)}},$ and $\alpha:= \frac{p_1}{2(p_1-1)}, \beta:=\frac{p_2}{2(p_2-1)}$ so that we have 
\[\dot{V}(x) \leq -p V(x)^{\alpha}-q V(x)^{\beta},\]
where $p,q>0$, $\alpha \in (0,1)$ and $\beta>1$. Using Lemma \ref{fxts_lemma}, we obtain that the equilibrium point $x^\star$ is fixed-time stable for the perturbed FxTS-GF flow given by \eqref{noisyfxts}.
\end{proof}

Thus, FxTS-GF flow in \eqref{fxtsgf} is robust against a class of vanishing additive disturbances.

\section{Regret Analysis} \label{sec: regret analysis}
The regret analysis is used to evaluate the effectiveness of an algorithm~\cite{sun2020continuous}. In this section, we analyze the regret of FxTS-GF \eqref{fxtsgf} in the offline setting. The static regret is defined as the accumulated difference between the objective function computed according to the state of the algorithm and the objective function computed according to the best fixed-point that minimizes the accumulated objective function. The regret at any time $T\!>\!0$ is given by
\begin{equation*}
    \mathcal{R}_S (T,x_0)= \int_0^T \left( f(x(t)) -f(x^\star)\right)dt,
\end{equation*}
where $x(0)=x_0$ is the initial condition. Note that in the offline setting the dynamic and static regret are equivalent. Also, observe that static regret is the time integral of the Lyapunov candidate used in the Theorem~\ref{lemma:fxts_V}.
\begin{theorem}[Regret Bound]\label{thm:Regret}
    Under the Assumptions \ref{basicasmp} and \ref{plineq}, the static regret of the flow FxTS-GF is bounded by a constant $l_1 +l_2$, with 
    \begin{equation*}
    l_1 = \frac{1}{p(2-\alpha)}, \quad l_2= \frac{(1+V(0)^{\beta-1})^{\frac{\beta-2}{\beta-1}}-1}{qV(0)^{\beta-2}(\beta-2)},
    \end{equation*}
    where $V(0)=f(x_0)-f^\star$.
\end{theorem}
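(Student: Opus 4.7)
The plan is to use $V(x) = f(x) - f^\star$ exactly as in the proof of Theorem~\ref{lemma:fxts_V}, so that $\mathcal{R}_S(T, x_0) = \int_0^T V(x(t))\,dt$, and to inherit from that proof the differential inequality $\dot V(x) \leq -pV^\alpha - qV^\beta$ with the constants $p,q,\alpha,\beta$ already defined there. I would then weaken this bound in two different ways and split the time axis at $T_1 := 1/(q(\beta-1))$, writing $\mathcal{R}_S(T,x_0) \leq \int_0^{T_1} V\,dt + \int_{T_1}^{T} V\,dt$. The first piece will be controlled via $\dot V \leq -qV^\beta$ (the $\beta$-term dominates while $V$ is large) and the second via $\dot V \leq -pV^\alpha$ (the $\alpha$-term suffices once $V$ is small). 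Because the FxTS conclusion of Theorem~\ref{lemma:fxts_V} guarantees $V(t) \equiv 0$ beyond the fixed settling time, the bound I produce will be valid uniformly in $T$.

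For the first piece, a standard comparison argument applied to the Bernoulli-type inequality $\dot V \leq -qV^\beta$ with $V(0) = f(x_0) - f^\star$ yields
\[ V(t) \leq V(0)\bigl(1 + q(\beta-1)V(0)^{\beta-1}\,t\bigr)^{-1/(\beta-1)}. \]
I would integrate this explicit upper bound on $[0, T_1]$ using the substitution $s = 1 + q(\beta-1)V(0)^{\beta-1}\,t$, under which $s$ ranges from $1$ to $1 + V(0)^{\beta-1}$ as $t$ traverses $[0, T_1]$. The result is a single power integral of $s^{-1/(\beta-1)}$, whose antiderivative evaluated at the endpoints reproduces precisely the constant $l_2$ after simplifying via $V(0)^{2-\beta} = 1/V(0)^{\beta-2}$.

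For the second piece, I would first note that evaluating the same Phase-1 bound at $t = T_1$ gives $V(T_1) \leq V(0)/(1 + V(0)^{\beta-1})^{1/(\beta-1)} \leq 1$, where the last inequality follows from $V(0)^{\beta-1} \leq 1 + V(0)^{\beta-1}$ and holds regardless of the value of $V(0)$. Thus on $[T_1, \infty)$ the trajectory is already in the regime where the $pV^\alpha$-term is the useful one, and the comparison argument applied to $\dot V \leq -pV^\alpha$ starting from $V(T_1)$ gives $V(t) \leq \bigl(V(T_1)^{1-\alpha} - p(1-\alpha)(t - T_1)\bigr)^{1/(1-\alpha)}$ up to a finite time, after which $V \equiv 0$ by FxTS. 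A direct substitution integration of this one-sided bound produces $\int_{T_1}^T V\,dt \leq V(T_1)^{2-\alpha}/(p(2-\alpha)) \leq 1/(p(2-\alpha)) = l_1$, using $V(T_1) \leq 1$ and $2 - \alpha > 0$. Adding the two pieces closes the proof.

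The main technical obstacle is the bookkeeping of the Phase-1 substitution and verifying that the resulting closed form reproduces $l_2$ with the correct sign structure: the factor $(\beta - 2)$ appears both in the exponent $(\beta-2)/(\beta-1)$ and as a denominator, and these change sign together across $\beta = 2$, which is why the formula is meaningful for both $\beta > 2$ and $1 < \beta < 2$. The degenerate case $\beta = 2$ makes $l_2$ singular; it arises for a single value of $p_2$ and would need a separate logarithmic computation, but the theorem's stated formula implicitly excludes it.
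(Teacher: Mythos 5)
Your proof is correct and follows essentially the same route as the paper's: the same Lyapunov function $V=f-f^\star$, the same two-phase comparison (the $\dot V\le -qV^\beta$ bound on $[0,\tfrac{1}{q(\beta-1)}]$ integrating to $l_2$, then the $\dot V\le -pV^\alpha$ bound integrating to $l_1$), and the same explicit substitutions. The one genuine streamlining is your observation that $V(T_1)\le V(0)\bigl(1+V(0)^{\beta-1}\bigr)^{-1/(\beta-1)}\le 1$ holds for \emph{every} initial condition, which lets you dispense with the paper's separate treatment of the cases $V(0)>1$ and $V(0)\le 1$; your remark that $\beta=2$ (i.e.\ $p_2=4/3$) is a degenerate logarithmic case implicitly excluded by the stated formula is also accurate.
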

\begin{proof}
Considering the Lyapunov candidate $V(x)=f(x)-f^\star$, we have the following bound on its time derivative $\dot{V}(x)$, which was proved in Theorem~\ref{lemma:fxts_V}, \begin{equation}
\dot{V}(x) \leq -pV(x)^{\alpha} -qV(x)^{\beta},
\end{equation}
where $p=c_1(2\mu)^{\frac{p_1}{2(p_1 -1)}},\: q=c_1(2\mu)^{\frac{p_2}{2(p_2 -1)}},\: \alpha= \frac{p_1}{2(p_1 -1)}$, and $\beta= \frac{p_2}{2(p_2 -1)}$.
When $V(t)> 1$ we use $\dot{V}(t)\leq -qV(t)^{\beta}$ and when $V(t)\leq 1$ we use $\dot{V}(t)\leq -pV(t)^{\alpha}$. The main idea is to apply these two approximations to obtain upper bound on $V(t)$, which in turn bounds the regret. We define the following constants:
\begin{align*}
T_1 &:= \frac{1}{p(1-\alpha)}, \quad T_2:= \frac{1}{q(\beta-1)}.
\end{align*}
We divide the proof into two parts. First we analyze the case $V(x(0))>1$ and then the case $V(x(0))\leq 1$. \\
\textbf{Case 1:} Consider $V(x(0))>1$, i.e. the initial conditions $x_0$ is such that $f(x_0)-f^\star>1$. For $t\leq T_2$ we have 
\begin{equation} \label{vbound1}
V(x(t))\leq \frac{V(x(0))}{\left( 1+qV(x(0))^{\beta-1}(\beta-1)t\right)^{\frac{1}{\beta-1}}}\cdot
\end{equation}
For $T_2 \leq t\leq T_1+T_2$, we get
\begin{equation} \label{vbound2}
V(x(t))\leq \left(1-p(1-\alpha)(t-T_2)\right)^{\frac{1}{1-\alpha}}.
\end{equation}
For $t\geq T_1+T_2$, $V(x(t))=0$, i,e, the solution converges to optimizer. We integrate both sides of the inequality (\ref{vbound1}) to get the following for all $T\in[0,T_1]$:
\begin{equation*}
\mathcal{R}_S(T,x_0)\leq \frac{(1+q(\beta-1)V(x(0))T)^{\frac{\beta-2}{\beta-1}}-1}{q(\beta-2)V(x(0))^{\beta-2}}.
\end{equation*}
Similarly for $T_2\leq T\leq T_1+T_2$, using both the inequalities (\ref{vbound1}) and (\ref{vbound2}), we get
\begin{equation*}
\mathcal{R}_S(T,x_0)\leq l_2 + \frac{1-\left(1-p(1-\alpha)(T-T_2) \right)^{\frac{2-\alpha}{1-\alpha}}}{p(2-\alpha)} .
\end{equation*}
As $V(t)=0$ for $t\geq T_1+T_2$, we get $\mathcal{R}_S(T,x_0)\leq l_1+l_2$ for $T\geq T_1+T_2$. \\
\textbf{Case 2:} Consider $V(x(0))\leq1$, i.e. the initial conditions $x_0$ is such that $f(x_0)-f^\star\leq1$. In this scenario also we use the same procedure. For $0\leq T\leq T_1$ we get
\begin{equation*}
\mathcal{R}_S(T,x_0)\leq  \frac{1-\left(1-p(1-\alpha)T \right)^{\frac{2-\alpha}{1-\alpha}}}{p(2-\alpha)} .
\end{equation*}
For $T\geq T_1$ we get $\mathcal{R}_S (T,x_0) \leq l_1$. Note that we can also say that $\mathcal{R}_S(T,x_0)\leq l_1+l_2$ for all $T\geq0$ and for all $V(0)$.
\end{proof}
Observe that instead of bounding by a constant we can also bound the regret $\mathcal{R}_S(T,x_0)$ by a step function as: 
\begin{align*}
\mathcal{R}_S(T,x_0)= \begin{cases} 
l_1 \quad &\text{if, } f(x_0)-f^\star \leq1, \: T\geq 0,\\ 
l_1 \quad &\text{if, } f(x_0)-f^\star >1, \: T\leq T_1,\\
l_1+l_2 \quad &\text{if, } f(x_0)-f^\star >1, \: T> T_1.
\end{cases}
\end{align*}


\section{Discretization of FxTS-GF} \label{sec: discretization} 
In practice, continuous-time dynamical systems can be implemented using iterative discrete-time approximations.
Following the work in \cite{garg2021MVIP,benosman2020optimizing}, 
in this section, we provide the analysis of the Euler-discretization of \eqref{fxtsgf}, and show that when the FxTS-GF~\eqref{fxtsgf} is discretized using Euler discretization, it leads to a consistent discretization. We use the following result from \cite{garg2021MVIP}.

\begin{lemma}[\textbf{Consistent Discretization}]\label{lemma: weak conv disc FxTS}
    Consider the following differential inclusion:
    \begin{equation}\label{eq:discrete cont dyn}
        \dot x \in \mathcal F(x),
    \end{equation}
    where $\mathcal{F}: \mathbb{R}^n\rightrightarrows\mathbb{R}^n$ is an upper semi-continuous set-valued map, taking non-empty, convex and compact values, with $0\in\mathcal{F}(\bar{x})$ for some $\bar x\in \mathbb R^n$. Assume that there exists a positive definite, radially unbounded $V: \mathbb R^n\rightarrow\mathbb R$ such that $V(\bar{x}) = 0$ satisfying \eqref{eq:dotV cond} with $\alpha = 1-\frac{1}{\xi}, \beta = 1+\frac{1}{\xi}$ for some $\xi>1$.
    If the function $V$ satisfies $V(x)\geq m\|x-\bar{x}\|^2$ for all $x\in\mathbb{R}^n$, where $m>0$ and $\bar x$ is the equilibrium point of \eqref{eq:discrete cont dyn}, then, for all $x_0\in\mathbb{R}^n$ and $\epsilon>0$, there exists $\eta^*>0$ such that for any $\eta\in (0,\eta^*]$, the following holds:{\small
    \begin{align}\label{eq:x disc bound}
        \hspace{-5pt}\|x_k-\bar{x}\|<
        \begin{cases}
        \frac{1}{\sqrt{m}}\left(\sqrt{\frac{p}{q}}\tan\left(\frac{\pi}{2}-\frac{\sqrt{pq}}{\xi}\eta k\right)\right)^\frac{\xi}{2}+\epsilon, & k\leq k^*;\\
        \epsilon, & k>k^*,
        \end{cases}
    \end{align}}\normalsize
    where $k^* = \Big\lceil\frac{\xi\pi}{2\eta\sqrt{pq}}\Big\rceil$ and $x_k$ is a solution of the forward-Euler discretization of \eqref{eq:discrete cont dyn}:
\begin{equation}\label{eq:discrete dyn FxTS}
    x_{k+1} \in x_k + \mathcal \eta \mathcal F(x_k),
\end{equation}
where $\eta>0$ is the time-step, starting from the point $x_0$.
\end{lemma}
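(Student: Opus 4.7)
My plan is to reduce the continuous-time Lyapunov differential inequality to a scalar Riccati equation, solve it in closed form to obtain the $\tan$ envelope, and then transfer this bound to the Euler iterates via a one-step Taylor comparison, with a separate trapping argument for times past $k^*$.

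\textbf{Continuous-time Riccati reduction.} With $\alpha = 1 - 1/\xi$ and $\beta = 1 + 1/\xi$, I introduce $W := V^{1/\xi}$. A direct computation using $\dot W = \xi^{-1} V^{1/\xi - 1}\dot V$ together with the identities $\xi\alpha = \xi - 1$ and $\xi\beta = \xi + 1$ collapses \eqref{eq:dotV cond} into the scalar Riccati inequality $\dot W \leq -(p + qW^2)/\xi$. The corresponding equality is separable; the substitution $\bar W = \sqrt{p/q}\tan\theta$ reduces it to $\dot\theta = -\sqrt{pq}/\xi$, giving the explicit solution $\bar W(t) = \sqrt{p/q}\tan(\arctan(\bar W_0\sqrt{q/p}) - (\sqrt{pq}/\xi)t)$. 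Using $\arctan(\cdot) \leq \pi/2$ together with a scalar comparison principle yields the initial-condition-free envelope $W(t) \leq \sqrt{p/q}\tan(\pi/2 - (\sqrt{pq}/\xi)t)$, which vanishes exactly at $T^\star = \xi\pi/(2\sqrt{pq})$. Inverting via $V = W^\xi$ and the quadratic lower bound $V(x) \geq m\|x - \bar x\|^2$ recovers the continuous-time version of \eqref{eq:x disc bound}.

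\textbf{One-step discrete dissipation and bulk comparison.} For an Euler step $x_{k+1} = x_k + \eta g_k$ with $g_k \in \mathcal{F}(x_k)$, positive definiteness plus radial unboundedness of $V$, and upper semi-continuity with compact values of $\mathcal{F}$, confine the iterates to a fixed compact set $K$ on which $\nabla V$ is $L$-Lipschitz and the selections satisfy $\|g\|^2 \leq M$. Taylor's theorem then gives $V_{k+1} \leq V_k + \eta\nabla V(x_k)^\intercal g_k + LM\eta^2/2$, and since \eqref{eq:dotV cond} must hold pointwise for \emph{every} selection from $\mathcal{F}(x_k)$, the first-order term is at most $-pV_k^\alpha - qV_k^\beta$. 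Hence
\begin{equation*}
V_{k+1} \leq V_k - \eta(pV_k^\alpha + qV_k^\beta) + C\eta^2, \qquad C := LM/2.
\end{equation*}
A standard scalar comparison between this perturbed inequality and the continuous Riccati solution $\bar V$ started from $\bar V(0) = V(x_0)$ then yields $V_k \leq \bar V(\eta k) + r(\eta)$ for $k \leq k^* - 1$, with $r(\eta) \to 0$ as $\eta \to 0$ (valid because $\bar V$ has bounded derivative on any compact subinterval of $[0, T^\star]$). Combining with the previous $\tan$ envelope and inverting via $\|x_k - \bar x\|^2 \leq V_k/m$ delivers the first case of \eqref{eq:x disc bound}.

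\textbf{Tail trapping and main obstacle.} For $k > k^*$ the $\tan$ bound is vacuous, so I switch to a trapping argument. Whenever $V_k$ falls below an $O(\eta^{1/(1-\alpha)})$ threshold, the dissipation $\eta pV_k^\alpha$ dominates the Euler residual $C\eta^2$, so the one-step inequality forces $V_k$ to remain in that small ball modulo an $O(\eta^2)$ overshoot; iterating, the iterates stay trapped. Choosing $\eta^\star(\epsilon)$ small enough that this trapping radius, transferred to state space via $V \geq m\|\cdot - \bar x\|^2$, is below $\epsilon$, and that the bulk slack $r(\eta)$ is also below $\epsilon$, yields the $\epsilon$-bound for $k > k^*$. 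The main obstacle is precisely this tail: the $\tan$ envelope blows up exactly at $k^*$, so the continuous-time comparison cannot be extended past that horizon, and one must independently quantify a uniform-in-$k$ trapping estimate that respects the non-uniqueness of the differential inclusion. The structural reason it works is the combination $\alpha < 1$ (fast dissipation near equilibrium) and $\beta > 1$ (uniform-in-initial-condition decay), which is exactly what the fixed-time hypothesis \eqref{eq:dotV cond} encodes pointwise.
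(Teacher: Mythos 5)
The paper does not actually prove this lemma: it is imported verbatim as a known result from \cite{garg2021MVIP}, so there is no in-paper proof to compare against. Judged on its own, your reconstruction follows what is essentially the standard route for such consistency results (a discrete Lyapunov difference inequality obtained from a one-step Taylor expansion, compared against the closed-form solution of the continuous-time scalar comparison ODE), and the key computations check out: the substitution $W=V^{1/\xi}$ does collapse \eqref{eq:dotV cond} into $\dot W\leq -(p+qW^2)/\xi$, the $\tan$ solution and the initial-condition-free envelope obtained from $\arctan(\cdot)\leq\pi/2$ reproduce exactly the first case of \eqref{eq:x disc bound} after inverting through $V=W^\xi$ and $V\geq m\|x-\bar x\|^2$, and the split into a bulk comparison for $k\leq k^*$ plus a trapping argument for $k>k^*$ is the right way to handle the fact that the envelope degenerates at $T^\star$.

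A few loose ends you should tighten. First, the Taylor step $V_{k+1}\leq V_k+\eta\nabla V(x_k)^\intercal g_k+LM\eta^2/2$ needs $V\in C^1$ with locally Lipschitz gradient; the lemma as stated only assumes $V$ positive definite and radially unbounded, so you are silently strengthening the hypotheses (this is consistent with how the lemma is applied, where $V=f-f^\star$ and $f\in C^{loc}_{1,1}$, but it should be stated). Second, confining the iterates to a compact set $K$ \emph{before} you have the one-step dissipation inequality (which itself uses constants $L,M$ defined on $K$) is mildly circular; the standard fix is to fix $K$ as a slightly enlarged sublevel set of $V$ containing $x_0$ and show by induction that for $\eta$ small enough the iterates cannot escape it in one step. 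Third, the trapping threshold where $\eta pV^\alpha$ balances $C\eta^2$ is $V\sim(C\eta/p)^{1/\alpha}=O(\eta^{1/\alpha})$, not $O(\eta^{1/(1-\alpha)})$; both tend to zero as $\eta\to 0$ so the argument survives, but the exponent as written is wrong. Finally, note that your $\eta^*$ ends up depending on both $\epsilon$ and $x_0$ (through $K$, $L$, $M$), which the statement permits but which is worth making explicit since the continuous-time settling bound itself is uniform in $x_0$.
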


Thus, in order to prove that an Euler discretization scheme of \eqref{fxtsgf} leads to a consistent discretization, it is sufficient to show that \eqref{fxtsgf} satisfies the conditions of Lemma~\ref{lemma: weak conv disc FxTS}.

\begin{lemma}\label{lemma:V p1 p2}
If $p_1, p_2$ satisfy
\begin{align}\label{eq: p1 p2 cond}
 2 + \frac{1}{p_1-2}  = \frac{1}{2-p_2},
\end{align}
with $\frac{3}{2}<p_2<2$, then, the function $V(x) = (f(x)-f^\star)$ satisfies conditions of Lemma \ref{lemma: weak conv disc FxTS}.
\end{lemma}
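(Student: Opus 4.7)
The plan is to verify, one at a time, the three hypotheses that Lemma~\ref{lemma: weak conv disc FxTS} places on the Lyapunov function $V(x)=f(x)-f^\star$: positive definiteness and radial unboundedness with $V(x^\star)=0$, the symmetric differential inequality $\dot V\le -pV^{1-1/\xi}-qV^{1+1/\xi}$ for some $\xi>1$, and the quadratic lower bound $V(x)\ge m\|x-x^\star\|^2$. The first and third of these come essentially for free from Assumption~\ref{plineq}: since $x^\star$ is the unique minimizer, $V$ is positive definite with $V(x^\star)=0$; and \eqref{quadgrowth} immediately gives both radial unboundedness and $V(x)\ge \tfrac{\mu}{2}\|x-x^\star\|^2$, so one can take $m=\mu/2$.

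The substantive step is the second one. Theorem~\ref{lemma:fxts_V} already established that along trajectories of \eqref{fxtsgf},
\begin{equation*}
\dot V(x)\le -pV(x)^\alpha-qV(x)^\beta,\qquad \alpha=\tfrac{p_1}{2(p_1-1)},\ \ \beta=\tfrac{p_2}{2(p_2-1)},
\end{equation*}
with $p,q>0$. What remains is to identify the exponents with $\alpha=1-\tfrac{1}{\xi}$, $\beta=1+\tfrac{1}{\xi}$ for a common $\xi>1$; equivalently, one needs $\alpha+\beta=2$. I would compute $\xi$ from each equality and match:
\begin{equation*}
\frac{1}{1-\alpha}=\frac{2(p_1-1)}{p_1-2}=2+\frac{2}{p_1-2},\qquad \frac{1}{\beta-1}=\frac{2(p_2-1)}{2-p_2}=\frac{2}{2-p_2}-2.
\end{equation*}
Equating these, dividing by two, and rearranging yields exactly the hypothesis \eqref{eq: p1 p2 cond}. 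Thus \eqref{eq: p1 p2 cond} is equivalent to $\alpha+\beta=2$, which is precisely what is needed to put the bound on $\dot V$ into the form of Lemma~\ref{lemma: weak conv disc FxTS}.

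Finally, I would verify $\xi>1$. The constraint $p_2\in(\tfrac{3}{2},2)$ gives $2-p_2\in(0,\tfrac{1}{2})$, so $\tfrac{2}{2-p_2}>4$ and therefore $\xi=\tfrac{2}{2-p_2}-2>2>1$, as required. Combining the three verified properties, $V$ satisfies every condition of Lemma~\ref{lemma: weak conv disc FxTS}, which completes the proof. The only non-trivial step is the algebraic equivalence in the middle paragraph; aside from that, the argument is essentially a bookkeeping exercise linking the exponents already produced in Theorem~\ref{lemma:fxts_V} to the symmetric form demanded by the discretization lemma.
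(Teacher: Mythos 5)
Your proposal is correct and follows essentially the same route as the paper: it reuses the bound $\dot V\le -pV^{\alpha}-qV^{\beta}$ from Theorem~\ref{lemma:fxts_V}, shows that condition \eqref{eq: p1 p2 cond} is exactly what makes $\frac{1}{1-\alpha}=\frac{1}{\beta-1}=\xi$ with $\xi=\frac{2p_2-2}{2-p_2}>2$, and invokes the quadratic growth \eqref{quadgrowth} for the lower bound $V(x)\ge \frac{\mu}{2}\|x-x^\star\|^2$. Your version is in fact slightly more careful than the paper's, since you spell out the algebraic equivalence and explicitly verify $\xi>1$ from the constraint $p_2\in(\tfrac{3}{2},2)$.
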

\begin{proof}
Consider the Lyapunov candidate $V(x) = (f(x)-f^\star)$. Its time derivative along the trajectories of \eqref{fxtsgf} reads:
\begin{align*}
    \dot V(x) =& -c_1\nabla f(x)^\intercal \frac{\nabla f(x)}{\|\nabla f\|^{\frac{p_2-2}{p_2-1}}}-c_2\nabla f(x)^\intercal \frac{\nabla f(x)}{\|\nabla f\|^{\frac{p_1-2}{p_1-1}}}.
\end{align*}
Following the analysis in Theorem~\ref{lemma:fxts_V}, it follows that
\begin{align*}
\dot{V} &\leq -pV^{\frac{p_1}{2(p_1-1)}} -q V^{\frac{p_2}{2(p_2-1)}},
\end{align*}
where $p= c_1 (2\mu)^{\frac{p_1}{2(p_1-1)}}$ and $q=c_2 (2\mu)^{\frac{p_2}{2(p_2-1)}}$. Note that under the condition \eqref{eq: p1 p2 cond}, it holds that there exists $\xi = -\frac{2p_2-2}{p_2-2} = \frac{2p_1-2}{p_1-2}>2$, so that the above equation reads
\begin{align*}
    \dot V(x) = -pV(x)^{1+\frac{1}{\xi}}
    -qV(x)^{1-\frac{1}{\xi}}.
\end{align*}
Thus, the candidate function $V$ satisfies the conditions of Lemma \ref{fxts_lemma} with $\alpha = 1-\frac{1}{\xi}$ and $\beta = 1+\frac{1}{\xi}$. Finally, note that under Assumption \ref{plineq}, it holds that $\frac{1}{2}\|\nabla f(x)\|^2\geq \mu (f(x)-f^\star)\geq 2\mu^2\|x-x^\star\|^2$, i.e., the function $V$ has quadratic growth, and thus, the function $V$ is radially unbounded, satisfying all the conditions of Lemma \ref{lemma: weak conv disc FxTS}. 
\end{proof}

\begin{theorem}\label{thm: discretized}
Assume that the functions $f$ satisfy Assumptions~\ref{basicasmp}-\ref{plineq}. Consider the discrete-time system 
\begin{align}\label{eq: dot x disc}
    x_{k+1} = x_k -\eta c_1\frac{\nabla f(x_k)}{\|\nabla f(x_k)\|^{\frac{p_1-2}{p_1-1}}} -\eta c_2\frac{\nabla f(x_k)}{\|\nabla f(x_k)\|^{\frac{p_2-2}{p_2-1}}}, 
\end{align}
obtained from discretizing the dynamics in \eqref{fxtsgf} using Euler's method with time step $\eta>0$, where $p_1, p_2$ satisfy \eqref{eq: p1 p2 cond}. Then, for all $\epsilon>0$, there exists $\eta^*>0$ such that for all $\eta\in (0, \eta^*]$, the trajectories of \eqref{eq: dot x disc} satisfy {\small
\begin{align}\label{eq: disc bound}
    \|x_k-x^\star\|\leq \begin{cases}
   \frac{1}{\sqrt{2}\mu}\left(\!\sqrt{\frac{p}{q}}\tan\left(\frac{\pi}{2}\!-\!\frac{\eta k\sqrt{pq}}{2\mu}\!\right)\right)^{\mu}\!+\!\epsilon\; ; & k\leq k^*,\\
    \epsilon \; ; & k > k^*,
    \end{cases}
\end{align}}\normalsize
where $k^* = \lceil \frac{\mu\pi}{\sqrt{pq}\eta} \rceil$, and $a, b, c_1, \mu>0$.
\end{theorem}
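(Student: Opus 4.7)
The approach is to obtain \eqref{eq: disc bound} as an immediate corollary of the consistent discretization Lemma \ref{lemma: weak conv disc FxTS}, once its hypotheses are verified for the FxTS-GF dynamics \eqref{fxtsgf}. Much of this verification has already been carried out in Lemma \ref{lemma:V p1 p2}, so the remaining work consists of assembling the pieces, checking the set-valued regularity, and tracking constants through the substitutions.

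First I would identify the set-valued map $\mathcal F$ associated with \eqref{fxtsgf}. Away from $x^\star$ the vector field is a continuous single-valued function; Assumption \ref{plineq} forces $\nabla f(x) = \bm 0$ only at $x^\star$, and a direct inspection shows that both summands of \eqref{fxtsgf} tend to $\bm 0$ as $\nabla f(x) \to \bm 0$, so the singleton-valued map $\mathcal F(x) = \{g(x)\}$ is upper semi-continuous with non-empty, convex, and compact values, and $\bm 0 \in \mathcal F(x^\star)$. Next I would invoke Lemma \ref{lemma:V p1 p2}: under \eqref{eq: p1 p2 cond}, the Lyapunov candidate $V(x) = f(x) - f^\star$ satisfies $\dot V \leq -pV^{1-1/\xi} - qV^{1+1/\xi}$ along \eqref{fxtsgf} with $\xi = (2p_1-2)/(p_1-2) > 2$, matching \eqref{eq:dotV cond} with $\alpha = 1 - 1/\xi$ and $\beta = 1 + 1/\xi$. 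Combining the PL inequality \eqref{grad_dom} with the quadratic growth \eqref{quadgrowth} yields $V(x) \geq m\|x-x^\star\|^2$ for some $m > 0$, which simultaneously delivers positive definiteness, radial unboundedness, and the quadratic lower bound required by Lemma \ref{lemma: weak conv disc FxTS}.

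With all hypotheses in place, the conclusion of Lemma \ref{lemma: weak conv disc FxTS} applies directly to the Euler update \eqref{eq: dot x disc}: for every $\epsilon > 0$ there exists $\eta^* > 0$ such that for $\eta \in (0, \eta^*]$ the iterates satisfy \eqref{eq:x disc bound} with $k^* = \lceil \xi\pi/(2\eta\sqrt{pq})\rceil$. Substituting the explicit values of $m$, $p$, $q$, and $\xi$ recorded above then rewrites the bound and the threshold in the form displayed in \eqref{eq: disc bound}. I anticipate no substantive analytical obstacle; the only delicate point is the bookkeeping that turns the generic prefactor $1/\sqrt m$ and exponent $\xi/2$ of Lemma \ref{lemma: weak conv disc FxTS} into the specific $\mu$-dependent expressions written in the statement, which amounts to chasing the constants through the PL-based chain of inequalities.
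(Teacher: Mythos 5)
Your proposal follows essentially the same route as the paper's own proof: invoke Lemma~\ref{lemma:V p1 p2} to supply the Lyapunov function $V(x)=f(x)-f^\star$ with the required $\alpha=1-\tfrac{1}{\xi}$, $\beta=1+\tfrac{1}{\xi}$ structure and quadratic lower bound, observe that the right-hand side of \eqref{fxtsgf} is single-valued and continuous so the set-valued hypotheses of Lemma~\ref{lemma: weak conv disc FxTS} hold with $\bar x=x^\star$, and then read off \eqref{eq: disc bound} from \eqref{eq:x disc bound} by substituting the constants. If anything, you are slightly more explicit than the paper about the upper semi-continuity at $x^\star$ and about the constant bookkeeping (where the paper's stated prefactor and exponent in \eqref{eq: disc bound} are in fact the weakest link), but the argument is the same.
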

\begin{proof}
The proof is based on Lemma \ref{lemma: weak conv disc FxTS}. First, note that per Lemma \ref{lemma:V p1 p2}, there exists a function $V$, namely $V(x) = (f(x)-f^\star)$, that satisfies the conditions of Lemma \ref{lemma: weak conv disc FxTS} with $\xi = \frac{2p_1-2}{p_1-2}$ and $\beta = 2\mu^2$. Next, note that the right-hand side of \eqref{fxtsgf} is single-valued and continuous, and thus, $\mathcal F(x) = -c_1\frac{\nabla f(x)}{\|\nabla f(x)\|^{\frac{p_1-2}{p_1-1}}} -c_2\frac{\nabla f(x)}{\|\nabla f(x)\|^{\frac{p_2-2}{p_2-1}}}$ satisfies the assumptions of Lemma \ref{lemma: weak conv disc FxTS}  with $\bar x = x^\star$. Thus, it holds that all the conditions of Lemma \ref{lemma: weak conv disc FxTS} are satisfied and the proof is complete. 
\end{proof}


\section{FxTS Gradient Flow with Momentum} \label{sec:FxTS-M-GF}
Training of neural networks entails computing gradients on mini-batches. These gradients are not exact and serve as only \emph{noisy} estimates of the true gradient of the loss function, leading to optimization algorithms not descending in optimal directions. This can be partially alleviated using the momentum method~\cite{polyak1964some}, which employs exponentially weighted averages to provide a better estimate of the true gradient. Gradient descent with momentum is defined by:
\begin{align}\label{eq:SGD-momentum}
    v_{t} &= \beta v_{t-1} + (1-\beta)\nabla f(x_{t-1})\nonumber\\
    x_{t} &= x_{t-1} - \alpha v_{t},
\end{align}
where $\alpha>0$ and $\beta\in[0,1)$. Here $x_t$ represents the $t^{\text{th}}$-iterate of the state $x$. In the limit that $\alpha$ is sufficiently small, the derivative $\dot{x}(t)$ can be approximated as $\dot{x}(t)\approx (x_t-x_{t-1})/\alpha$. Using this analogy, the continuous-time variant of the momentum method can be expressed as:
\begin{align}\label{eq:SGD-momentum-cont}
    \dot{x}(t) &= - v(t) \nonumber\\
    \dot{v}(t) &= \lambda (\nabla f(x(t))-v(t)),
\end{align}
where $\lambda = (1-\beta)/\alpha$. We now propose a suitable modification of \eqref{eq:SGD-momentum-cont}, such that the resulting dynamics converges to equilibrium $(x^\star,\bm{0})$ in a fixed-time. We refer to this modified fixed-time stable dynamical system as \emph{FxTS(M)-GF} which is an acronym for fixed-time stable gradient flow with momentum. The continuous-time dynamics for the FxTS(M)-GF for $(x,v)\neq(x^\star,\bm{0})$ is described as:
\begin{align}\label{eq:FxTS-M-GF}
    \dot{x} &= - v\cdot h(x,v)  \nonumber\\
    \dot{v} &= \lambda (\nabla f(x)-v)\cdot g_{p,q}(\|\nabla\!f(x)\!-\!v\|),
\end{align}
where $g_{p,q}:\mathbb R_{>0}\!\to\!\mathbb R_{>0}$ is defined as $g_{p,q}(s)\coloneqq 1\big/{(s)^{\frac{p-2}{p-1}}}+1\big/{(s)^{\frac{q-2}{q-1}}}$ with $p>2$, $q\in(1,2)$. The function $h(x,v)$ is defined as:
\begin{equation*}
    h(x,v) = \left\{\begin{array}{cl}
         \!\!\!g_{p,q}(\|\nabla\!f(x)\|), & \text{if $\|\nabla\!f(x)\|>\|\nabla\!f(x)\!-\!v\|$} \\
         \!\!\!1, & \text{else}
    \end{array}\right.\!.
\end{equation*}
It is possible to choose the exponents $p, q$ such that $g_{p,q}$ is monotonically decreasing on $\mathbb R_{>0}$. For instance, choosing $p=2.1$ and $q=1.98$ results in a monotonically decreasing $g_{p,q}(\cdot)$. We now state our main result.

\begin{assumption}\label{ass:LipGradient}
    The function $f$ is $\mu$-strongly convex and has an $L$-Lipschitz continuous gradient, i.e., $\mu\leq\|\nabla^2 f(x)\|\leq L$ for all $x\in\mathbb{R}^n$.
\end{assumption}

\begin{theorem}\label{thm:FxTS-M-GF}
    Under Assumptions~\ref{basicasmp} and \ref{ass:LipGradient}, if exponents $p$ and $q$ are chosen such that $g_{p,q}(\cdot)$ is monotonically decreasing on $\mathbb R_{>0}$ and $\lambda>L$, then the equilibrium point $(x^\star,\bm{0})$ is fixed-time stable for the flow described in \eqref{eq:FxTS-M-GF}.
\end{theorem}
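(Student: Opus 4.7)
The plan is to invoke Lemma~\ref{fxts_lemma} with a composite Lyapunov candidate that simultaneously measures the objective gap and the momentum tracking error. Introduce $e:=\nabla f(x)-v$, so that the flow \eqref{eq:FxTS-M-GF} reads $\dot x=-v\,h(x,v)$ and $\dot v=\lambda\, e\, g_{p,q}(\|e\|)$, with equilibrium $(x,v)=(x^\star,\bm 0)$ characterized equivalently (using Assumption~\ref{ass:LipGradient}) by $\nabla f(x)=0$ and $e=0$. A short computation gives $\dot e=\nabla^2 f(x)\dot x-\dot v=-h(x,v)\,\nabla^2 f(x)\, v-\lambda\, e\, g_{p,q}(\|e\|)$.

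I would then try
\[
V(x,v):=(f(x)-f^\star)+\tfrac{1}{2\lambda}\|e\|^2,
\]
which is positive definite, radially unbounded, and vanishes only at $(x^\star,\bm 0)$. Differentiating along trajectories and substituting $v=\nabla f(x)-e$ yields
\[
\dot V=-h(x,v)\|\nabla f(x)\|^2+h(x,v)\,\nabla f(x)^\top e-\tfrac{h(x,v)}{\lambda}\,e^\top\nabla^2 f(x)\,v-\|e\|^2 g_{p,q}(\|e\|).
\]
The last summand equals $-\|e\|^{p/(p-1)}-\|e\|^{q/(q-1)}$, which is already of the FxTS form in $\|e\|$; the Hessian cross term is controlled by $\bigl|\tfrac{h}{\lambda}e^\top\nabla^2 f(x)v\bigr|\le \tfrac{L\,h}{\lambda}\|e\|(\|\nabla f(x)\|+\|e\|)$ thanks to $\|\nabla^2 f\|\le L$, and the hypothesis $\lambda>L$ is exactly what forces $L/\lambda<1$ so that this term is absorbable.

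The analysis then splits according to the definition of $h$. In the regime $\|\nabla f(x)\|>\|e\|$ one has $h=g_{p,q}(\|\nabla f(x)\|)$ and the leading term becomes $-g_{p,q}(\|\nabla f(x)\|)\|\nabla f(x)\|^2=-\|\nabla f(x)\|^{p/(p-1)}-\|\nabla f(x)\|^{q/(q-1)}$, which via strong convexity ($\|\nabla f(x)\|^2\ge 2\mu(f(x)-f^\star)$) gives FxTS-type decay in the objective gap. The residual cross term $g_{p,q}(\|\nabla f(x)\|)\nabla f(x)^\top e$ is handled by the monotonicity of $g_{p,q}$: since $\|\nabla f(x)\|\ge\|e\|$, we have $g_{p,q}(\|\nabla f(x)\|)\le g_{p,q}(\|e\|)$, so Cauchy--Schwarz plus Young absorbs it into small fractions of the two dominant dissipative terms. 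In the complementary regime $\|\nabla f(x)\|\le\|e\|$, $h\equiv 1$ and $\|v\|\le 2\|e\|$, so every remaining term is dominated by $-\|e\|^2 g_{p,q}(\|e\|)$, again using $\lambda>L$ for the Hessian piece.

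Combining both cases and homogenizing through $\tfrac{1}{2\lambda}\|e\|^2\le V$ and $f(x)-f^\star\le V$, one arrives at $\dot V\le -p_V V^{\alpha}-q_V V^{\beta}$ with $p_V,q_V>0$, $\alpha=\tfrac{p}{2(p-1)}\in(0,1)$, and $\beta=\tfrac{q}{2(q-1)}>1$; Lemma~\ref{fxts_lemma} then delivers fixed-time stability of $(x^\star,\bm 0)$. The main obstacle will be the cross term $g_{p,q}(\|\nabla f(x)\|)\,\nabla f(x)^\top e$: its effective exponent matches neither of the two dissipative powers, and without the switching design of $h$ (which ensures the larger of $\|\nabla f(x)\|,\|e\|$ always selects the smaller value of the monotone $g_{p,q}$) and the spectral margin $\lambda>L$, it cannot be absorbed — these two ingredients are precisely what make $V$ an admissible FxTS Lyapunov function.
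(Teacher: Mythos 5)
Your overall architecture is the same as the paper's: a composite Lyapunov function coupling the gradient magnitude with the tracking error $e=\nabla f(x)-v$, a case split on the switching set $\|\nabla f(x)\|\gtrless\|e\|$, the monotonicity of $g_{p,q}$ to transfer the smaller value of $g_{p,q}$ to the larger argument, and an application of Lemma~\ref{fxts_lemma}. The difference is your choice of first component, and it is not cosmetic. The paper takes $V=\tfrac12\|\nabla f(x)\|^2+\tfrac12\|\nabla f(x)-v\|^2$; because both summands differentiate through $(\nabla^2 f)\dot x$, the cross terms $\mp h\,(\nabla f)^\intercal(\nabla^2 f)(\nabla f-v)$ cancel exactly by symmetry of the Hessian, leaving only the two definite quadratic forms $-h\,(\nabla f)^\intercal(\nabla^2 f)(\nabla f)$ and $+h\,(\nabla f-v)^\intercal(\nabla^2 f)(\nabla f-v)$, which are immediately sandwiched by $\mu$ and $L$; the margin $\lambda>L$ then closes the argument with no loss. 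Your choice $V=(f-f^\star)+\tfrac{1}{2\lambda}\|e\|^2$ leaves two genuine cross terms, $h\,\nabla f^\intercal e$ and $\tfrac{h}{\lambda}e^\intercal(\nabla^2 f)v$, and your proposal only asserts that ``Cauchy--Schwarz plus Young absorbs'' them.

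That absorption step is the gap, and it does not close under the stated hypothesis. In Case 1 ($\|e\|\le\|\nabla f\|$, $h=g_{p,q}(\|\nabla f\|)$) the direct bound $|h\,\nabla f^\intercal e|\le h\|\nabla f\|\,\|e\|\le h\|\nabla f\|^2$ exactly cancels your leading dissipative term $-h\|\nabla f\|^2$, so you are forced to split via Young's inequality, surrendering a fixed fraction of both dissipative terms; the Hessian cross term then contributes $\tfrac{L}{\lambda}h\bigl(\|e\|\|\nabla f\|+\|e\|^2\bigr)$, and optimizing the Young parameters shows the combined coefficients stay negative only for $L/\lambda$ below roughly $0.46$, i.e.\ $\lambda\gtrsim 2.15\,L$ --- strictly stronger than the claimed $\lambda>L$. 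In Case 2 ($h=1$) the term $\nabla f^\intercal e\le\|e\|^2$ must be absorbed into $-\|e\|^2 g_{p,q}(\|e\|)$, which requires a uniform lower bound $g_{p,q}>1$ that is not guaranteed by the hypotheses. So as written your proof either fails or proves a weaker theorem with an inflated threshold on $\lambda$. The fix is to replace $f(x)-f^\star$ by $\tfrac12\|\nabla f(x)\|^2$ (and drop the $1/\lambda$ weighting), which is precisely the paper's choice; your final reduction to $\dot V\le -p_VV^{\alpha}-q_VV^{\beta}$ via the inequalities $V\le 2V_1$ or $V\le 2V_2$ on the respective regimes, and the handling of the residual $+2LV$ term by a sub-case split on $\|e\|\lessgtr 1$ (as in Theorem~\ref{thm:Robustness}), then goes through exactly under $\lambda>L$.
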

\begin{proof}
    We consider the candidate Lyapunov function,
    \begin{equation}\label{eq:FxTS-M-GF-V}
        V(x,v) = \underbrace{\frac{1}{2}\|\nabla f(x)\|^2}_{V_1(x)} + \underbrace{\frac{1}{2}\|\nabla f(x)-v\|^2}_{V_2(x,v)}.
    \end{equation}
    Clearly, $V(x,v)>0$ for all $(x,v)\neq(x^\star,\bm{0})$. Additionally, define a set $\mathcal{S} = \{(x,v)\; |\; \|\nabla f(x)\|\leq \|\nabla f(x)-v\|\}$, and observe that
    \begin{align}\label{eq:FxTS-M-GF-V1V2}
        x\in \mathcal S \quad \implies V &\leq 2V_2\nonumber\\
        x\notin \mathcal S \quad \implies V &\leq 2V_1.
    \end{align}
Taking time derivative of $V$ along trajectories of \eqref{eq:FxTS-M-GF} yields
    \begin{align*}
        \dot{V} &= (\nabla\!f)^\intercal(\nabla^2\!f)\dot{x} + (\nabla\!f-v)^\intercal((\nabla^2\!f)\dot{x}-\dot{v})\\
        &= -h\!\cdot\!(\nabla\!f)^\intercal(\nabla^2\!f)v-h\!\cdot\!(\nabla\!f-v)^\intercal(\nabla^2\!f)v\\
        &\quad -\lambda(\nabla\!f-v)^\intercal(\nabla\!f-v)\!\cdot\!g_{p,q}(\|\nabla\!f-v\|)\\
        &= -h\!\cdot\!(\nabla\!f)^\intercal(\nabla^2\!f)(\nabla\!f) + h\!\cdot\!(\nabla\!f-v)^\intercal(\nabla^2\!f)(\nabla\!f-v)\\
        &\quad -\lambda(\nabla\!f-v)^\intercal(\nabla\!f-v)\!\cdot\!g_{p,q}(\|\nabla\!f-v\|)\\
        &\leq -h\!\cdot\!\mu\|\nabla f\|^2 +\|\nabla f\!-\!v\|^2\left(h\!\cdot\!L\!-\!\lambda\!\cdot\!g_{p,q}(\|\nabla\!f\!-\!v\|)\right),
    \end{align*}
    where the last inequality follows from $\mu$-strong convexity and $L$-Lipschitz gradient conditions.\\
    {\bf Case 1: $x\notin \mathcal S$}: In this case, $h$ is given by $g_{p,q}(\|\nabla\!f\|)$. Moreover, $\|\nabla\!f\!-\!v\|\leq \|\nabla f\|$, which leads to $g_{p,q}(\|\nabla\!f\|)\leq g_{p,q}(\|\nabla\!f-v\|)$ due to the monotonicity of $g_{p,q}$. Using this and the fact that $\lambda>L$, $\dot{V}(x)$ can be upper-bounded as:
    \begin{align}\label{eq:V1dot}
        \dot{V} &\leq \!-g_{p,q}(\|\!\nabla\!f\!\|)\mu\|\nabla\!f\|^2\!-\! g_{p,q}(\|\!\nabla\!f\!-\!v\|)\!(\lambda\!-\!L)\!\|\nabla\!f\!-\!v\|^2\nonumber\\
        &\leq -\mu\left(2{V_1}\right)^{\frac{p}{2(p-1)}}-\mu\left(2{V_1}\right)^{\frac{q}{2(q-1)}}, \nonumber\\
        &\leq -\mu V^{\frac{p}{2(p-1)}}-\mu V^{\frac{q}{2(q-1)}}.
    \end{align}
    {\bf Case 2: $x\in \mathcal S$}: In this case, $h=1$ and thus, we have
    \begin{align}\label{eq:V2dot}
        \dot{V} &\leq -\mu\|\nabla\!f\|^2 + \left(L-\lambda\!\cdot\!g_{p,q}(\|\nabla\!f\!-\!v\|)\right)\|\nabla\!f\!-\!v\|^2\nonumber\\
        &\leq L\left(2{V_2}\right) - \lambda\left(2{V_2}\right)^{\frac{p}{2(p-1)}}- \lambda\left(2{V_2}\right)^{\frac{q}{2(q-1)}}\nonumber\\
        &\leq 2LV - \lambda V^{\frac{p}{2(p-1)}}- \lambda V^{\frac{q}{2(q-1)}}.
    \end{align}
    Since $p>2$ and $q\in(1,2)$, using the similar arguments as in the proof of Theorem~\ref{thm:Robustness}, we obtain that there exists $\gamma>0$ such that for all $x\in \mathcal S$
    \begin{equation}\label{eq:V2dot2}
        \dot{V}(x) \leq -\gamma V(x)^{\frac{p}{2(p-1)}}-\gamma V(x)^{\frac{q}{2(q-1)}}.
    \end{equation}
    Thus, from \eqref{eq:V1dot} and \eqref{eq:V2dot2}, it follows that
    \begin{equation}\label{eq:finalVdot}
        \dot{V}(x) \leq -\min\{\gamma,\mu\}\left(V(x)^{\frac{p}{2(p-1)}}+V(x)^{\frac{q}{2(q-1)}}\right),
    \end{equation}
    for all $x\in \mathbb R^n$
    i.e., the FxTS(M)-GF is fixed-time convergent gradient flow following Lemma \ref{lemma:fxts_V}.
\end{proof}\vspace{-.5em}
The proposed FxTS(M)-GF inherits some of the desirable properties of the aforementioned FxTS-GF, such as robustness and constant regret, however, a detailed analysis of the FxTS(M)-GF is left for future work.

\begin{figure}
     \centering
     \begin{subfigure}
         \centering
         \includegraphics[width=0.47\linewidth]{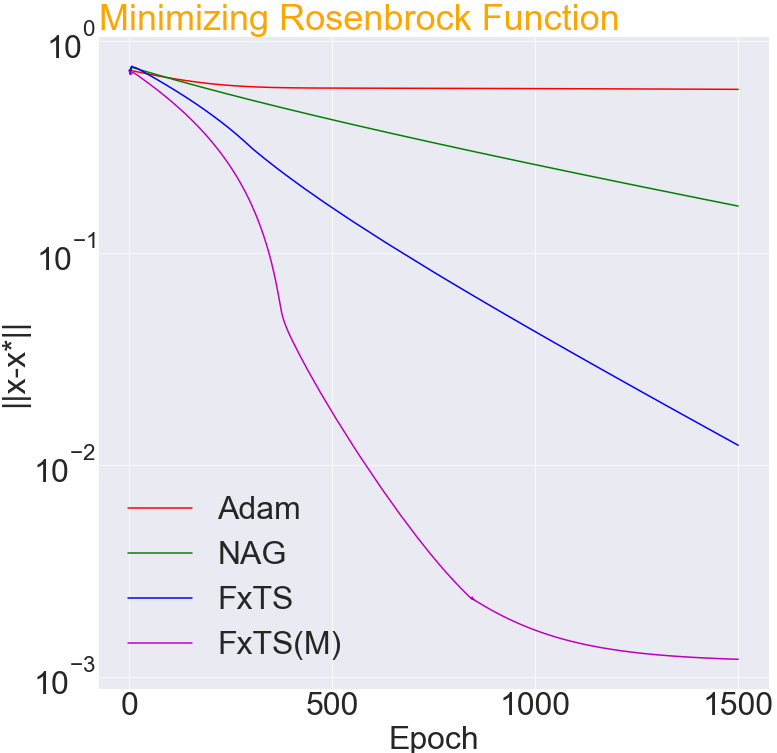}
         \label{fig:sub-first}
     \end{subfigure}
     \hfill
     \begin{subfigure}
         \centering
         \includegraphics[width=0.47\linewidth]{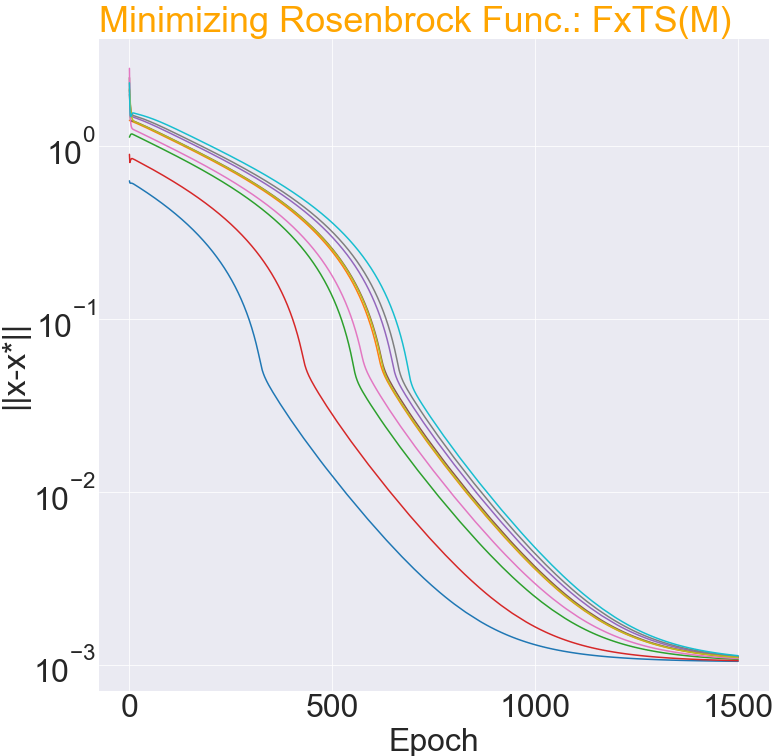}
         \label{fig:sub-second}
     \end{subfigure}
        \caption{Minimization of Rosenbrock function. (a) Comparison of various optimization algorithms for the initial condition $(0.3,0.8)$. (b) Performance of the FxTS(M)-GF algorithm at varying initial conditions.}
        \label{fig:Rosenbrock}
        \vspace{-1em}
\end{figure}

\begin{figure*}[!ht]
	\begin{center}
		\begin{tabular}{ccc}
			\includegraphics[width=0.5\columnwidth]{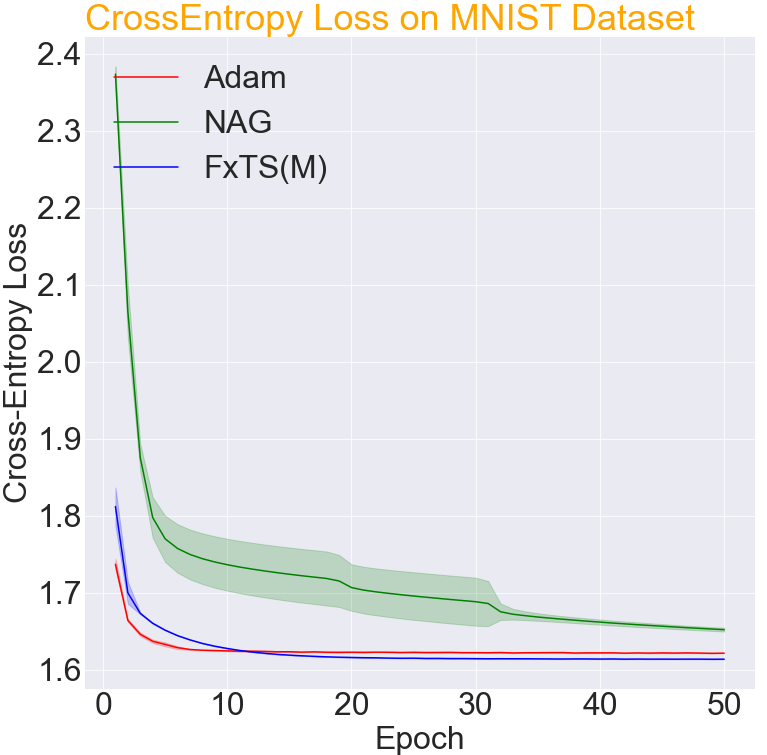}&
			\includegraphics[width=0.5\columnwidth]{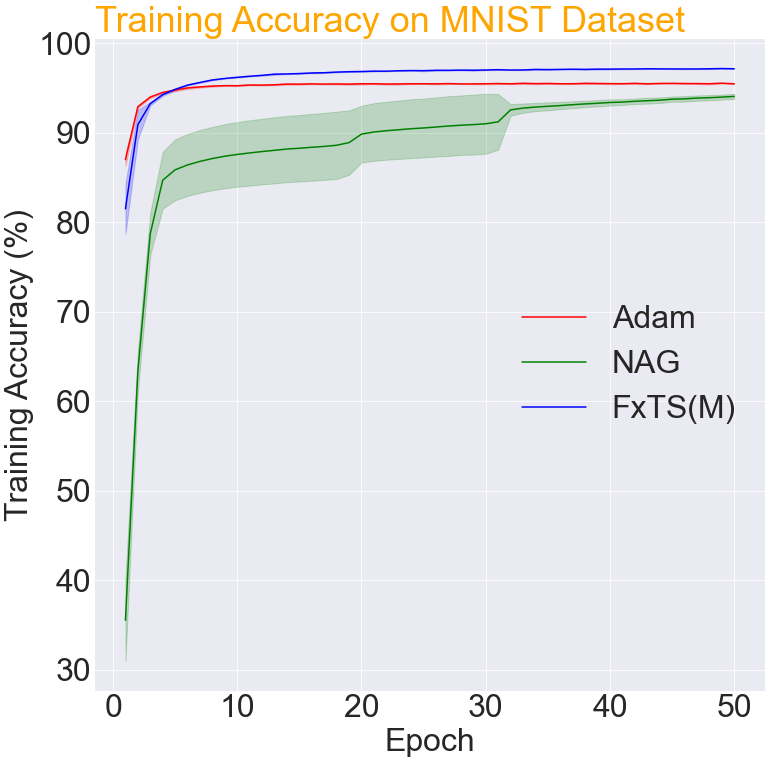}&
			\includegraphics[width=0.5\columnwidth]{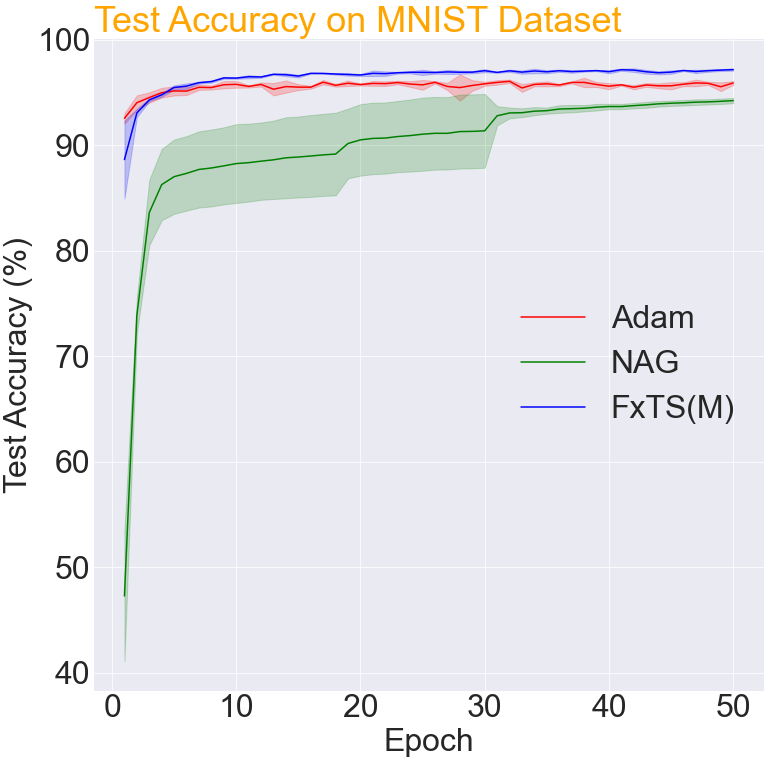}\cr
			(a)&(b)&(c)\cr
			\includegraphics[width=0.5\columnwidth]{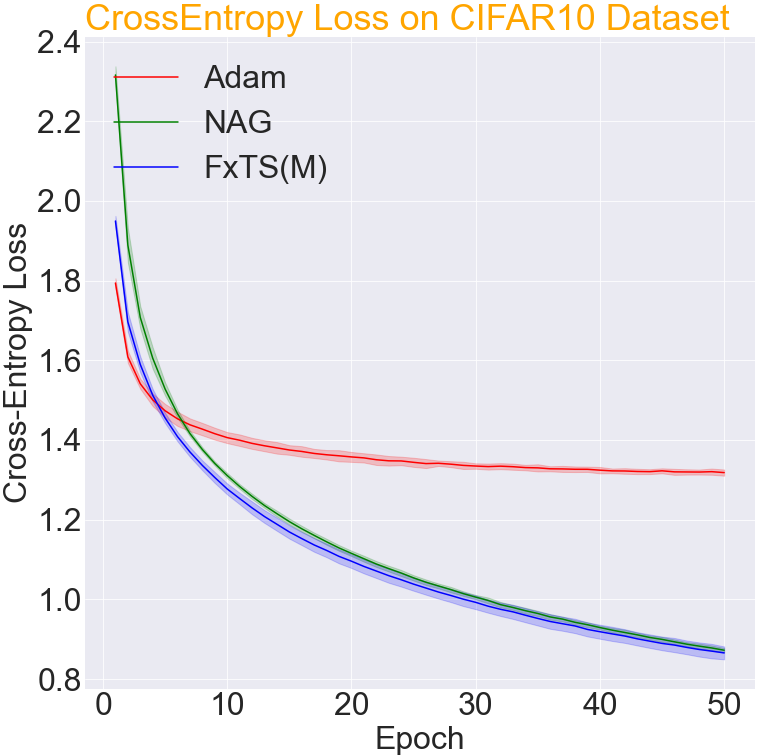}&
			\includegraphics[width=0.5\columnwidth]{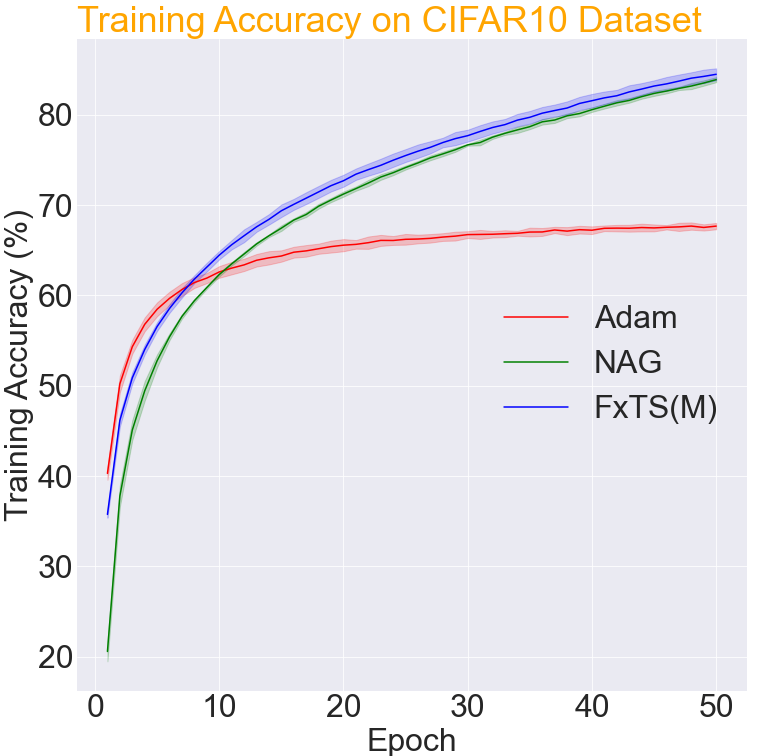}&
			\includegraphics[width=0.5\columnwidth]{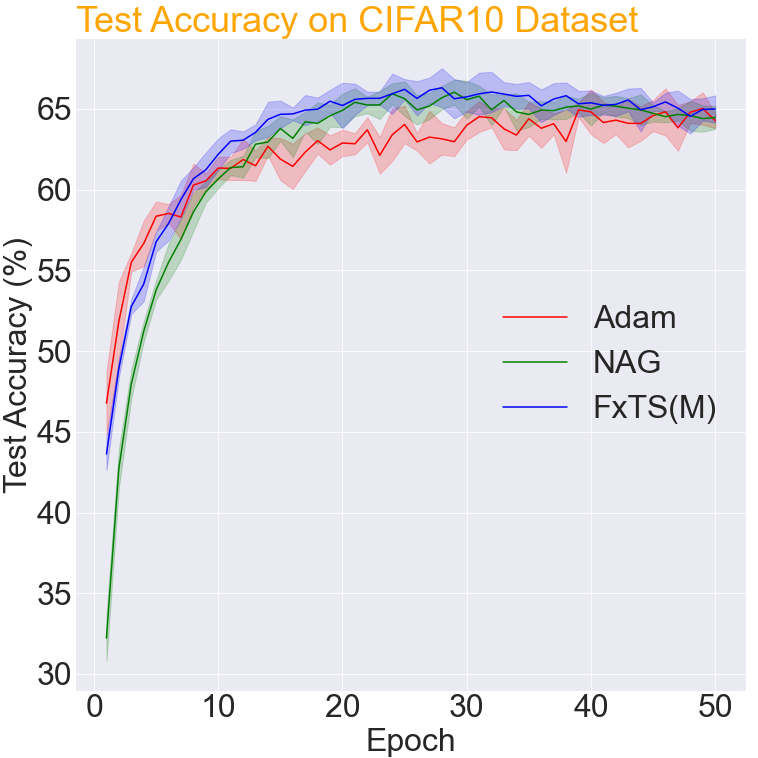}\cr
			(d)&(e)&(f)
		\end{tabular}
		\caption{Comparison of several optimization algorithms for training deep neural networks on MNIST and CIFAR datasets across five random seeds. FxTS(M) outperforms Adam and NAG optimizers on various performance measures.}
		\label{fig:NN}
	\end{center}
	\vspace{-1.5em}
\end{figure*}

\vspace{-.5em}\section{Experiments} \label{sec: experiments}
In this section, we present empirical results on optimizing (non-convex) functions that satisfy PL-inequality and training deep neural networks. The algorithms were implemented using PyTorch~0.4.1 on a 16GB Core-i7 2.8GHz CPU and NVIDIA GeForce GTX-1060 GPU.\\
\noindent {\bf Algorithms.} We compare the proposed FxTS-GF and FxTS(M)-GF algorithms against the state-of-the-art Adam~\cite{kingma2015adam} and the Nesterov accelerated gradient (NAG) descent~\cite{polyak1964some, sutskever2013importance}. The hyperparameters for different optimizers are tuned for optimal performance. We use constant step-size for all the algorithms.

\noindent{\bf Datasets.} For the purpose of implementing deep neural networks, we examine the performances of the aforementioned algorithms on two widely used datasets: MNIST (60000 training samples, 10000 test samples)~\cite{lecun1998gradient}, and CIFAR10 (50000 training samples, 10000 test samples)~\cite{krizhevsky2009learning}.

\subsection{Optimizing Rosenbrock Function}\label{subsec:Rosenbrock}
Rosenbrock function~\cite{rosenbrock1960automatic} is a \emph{non-convex} function with a global minimum at $(1,1)$, and is often used to benchmark optimization algorithms. The global minimum resides in a long, narrow, parabolic-shaped flat valley. While it is easy to locate the valley, the convergence of optimization algorithms to a global minimum is difficult. The Rosenbrock function is given by:
\begin{equation*}
	f(x_1,x_2) = (1 - x_1)^2 + 100(x_2 - x_1^2)^2.
\end{equation*}
Despite it being a non-convex function, the Rosenbrock function can be shown to satisfy PL-inequality~\eqref{plineq} with modulus $\mu=0.1$ in the region $[-1,1]\times[-1,1]$. We evaluate the convergence behavior of various optimization algorithms for the initial point $(x_1,x_2)=(0.3,0.8)$ and constant learning rate $10^{-3}$. We use the following hyperparameters for the optimization algorithms:\\
\noindent {\bf Adam}: $\beta'{s} = (0.9, 0.999)$, $\epsilon = 10^{-8}$\\
\noindent {\bf NAG}: Momentum = 0.5\\
\noindent {\bf FxTS-GF}: $\beta'{s} = (1.25, 1.25)$, $\alpha'{s} = (20, 1.98)$\\
\noindent {\bf FxTS(M)-GF}: $\beta'{s} = (1.25, 1.25)$, $\alpha'{s} = (20, 1.98)$, Momentum = 0.18

Figure~\ref{fig:Rosenbrock}a plots the evolution of the norm of the error term $x-x^\star$ for various optimization algorithms. It can be seen that the proposed FxTS-GF and FxTS(M)-GF algorithms converge much faster than the Adam and NAG optimizers. The detailed evolution of descent trajectories of the aforementioned optimization algorithms for varying initial conditions can be found in the supplementary material. The performance of the FxTS(M)-GF algorithm for randomly chosen initial conditions is shown in Figure~\ref{fig:Rosenbrock}b on a semilog-scale. A straight line on a semilog-scale depicts exponentially fast convergence. However, the proposed FxTS(M)-GF is shown to achieve faster than exponential convergence to the global minimum, independent of initialization. Additional results can be found in the supplementary material.


\subsection{Training Deep Neural Networks}\label{subsec:TrainDNN}
The accelerated convergence behavior of the FxTS(M)-GF is further evaluated by training deep neural networks on MNIST and CIFAR10 datasets, respectively. As before, the performance of the FxTS(M)-GF algorithm is benchmarked against the Adam and NAG optimizers for three criteria: (i) minimization of training loss, (ii) training accuracy, (iii) accuracy on the test set (generalization). The neural network architecture for MNIST consists of a convolutional layer (with 32 filters of size $3\times3$), followed by a dense layer of output size 128. The final layer consists transforms the 128-dimensional input into a 10-dimensional output (corresponding to ten classes). We employ the ReLU activation function for the convolutional and the first linear layer, and the softmax activation function for the output layer. The loss function is the cross-entropy along with $l_2$-regularization (coefficient 0.01). The learning rates for Adam and NAG are kept at $10^{-3}$. A larger learning rate for Adam and NAG seems to destabilize the learning curve. On the other hand, the learning rate for the FxTS(M)-GF is chosen as $0.005$. The momentum parameters for the NAG and FxTS(M)-GF algorithms are chosen as 0.5 and 0.3, respectively. The training loss vs epoch is presented in Figure~\ref{fig:NN}a, while the training and testing accuracies are depicted in Figures~\ref{fig:NN}b and \ref{fig:NN}c, respectively. These figures depict average performances across five random seeds. As can be seen, our FxTS(M)-GF achieves the lowest training loss on the MNIST dataset. Moreover, this performance gain also translates into better performance on training and testing accuracies.

For evaluating optimizers on the CIFAR10 dataset, we consider a neural network architecture with two convolutional layers (6 filters of size $5\times5$, 16 filters of size $5\times5$), each followed by a max-pooling layer (with a $2\times 2$ window). The architecture also consists of three fully connected layers of output sizes 120, 84, and 10 (number of classes), respectively. We employ the ReLU activation function for the convolutional and the first two linear layers, and the softmax activation function for the output linear layer. The learning rates for all the optimizers are chosen as $10^{-3}$, while the momentum parameters for the NAG and the FxTS(M)-GF are chosen as 0.5. The training loss (averaged across five random seeds) vs epoch is presented in Figure~\ref{fig:NN}d, while the training and testing accuracies are depicted in Figures~\ref{fig:NN}e and \ref{fig:NN}f, respectively. As can be seen, the proposed FxTS(M)-GF achieves the lowest training loss on the CIFAR10 dataset, too, along with better performance on training and testing accuracies. Interestingly, the training curve with Adam optimizer plateaus quite early during the training.\vspace{-1em}

\section{Conclusion} \label{sec: conclusion}
In this paper, we leverage continuous-time stability theory to develop novel optimization algorithms with accelerated convergence guarantees. In particular, we demonstrate that a class of continuous-time dynamical systems, suitably designed to track the minimum of convex objective functions, can do so in a fixed time independent of initialization. The resulting continuous-time dynamics are shown to be consistent upon discretization. The continuous-time dynamical system also comprises two desirable characteristics: (a) robustness to additive perturbations, (b) constant regret bounds. As an extension to data-driven learning, we also develop a momentum-based fixed-time convergent gradient flow scheme. The equivalent discretized algorithm is validated on several examples consisting of training of neural networks and minimization of invex functions. The proposed FxTS(M)-GF scheme outperforms Adam and NAG optimizers on several performance measures.

\pagebreak

\bibliography{aaai22}
\end{document}